\theoremstyle{plain} \numberwithin{equation}{section}
\newtheorem{theo}{Theorem}[section]
\newtheorem{lemm}[theo]{Lemma}
\theoremstyle{definition}
\newtheorem{defi}[theo]{Definition}
\newtheorem{rema}[theo]{Remark}
\newtheorem{exam}[theo]{Example}
\newtheorem{claim}[theo]{Claim}
\def\mG{\mathcal G}
\def\mHT{H_T}
\def\Z{\mathbb Z}
\def\R{\mathbb R}
\def\l{\ell}
\def\t{t}
\def\s{s}
\def\w{w}
\def\u{u}
\def\si{\sigma}
\def\mA{\mathcal A}
\def\mG{\mathcal G}
\DeclareMathOperator{\Map}{Map}
\DeclareMathOperator{\rank}{rank}
\subjclass[2010]{Primary 14M15; Secondary 55N91}
\keywords{flag manifold, GKM graph, equivariant cohomology}
\thanks{The author was supported by JSPS Research Fellowships for Young Scientists.}
\begin{document}
\title[graph cohomology ring of a flag manifold of type $G_2$]{The graph cohomology ring of the GKM graph of a flag manifold of type $G_2$}
\author[Y. Fukukawa]{Yukiko Fukukawa}  
\address{Department of Mathematics, Osaka City University, Sumiyoshi-ku, Osaka 558-8585, Japan.}
\email{yukiko.fukukawa@gmail.com} 

\date{\today}
\maketitle 

 \begin{abstract}
 Suppose a compact torus $T$ acts on a closed smooth manifold $M$. Under certain conditions, Guillemin and Zara associate to $(M, T )$  a labeled graph $\mG_M$ where the labels lie in $H^2(BT)$. 
They also define the subring $H_T^*(\mG_M)$ of $\bigoplus_{v\in V(\mG_M)}H^*(BT)$, where $V(\mG_M)$ is the set of vertices of $\mG_M$ and we call $H_T^*(\mG_M)$ the \lq\lq graph cohomology" ring of $\mG_M$. It is known that the equivariant cohomology ring of $M$ can be described by using combinatorial data of the labeled graph. The main result of this paper is to determine the ring structure of equivariant cohomology ring of a flag manifold of type $G_2$ directly, using combinatorial techniques on the graph $\mG_M$. This gives a new computation of the equivariant cohomology ring of a flag manifold of type $G_2$.   (See \cite{deander}.)
 \end{abstract}

\section{Introduction}\label{intro}

 Suppose that a closed smooth manifold $M$ has an action of a compact torus $T$. 
 If the $T$-action is \lq\lq nice", then it is known that we can describe its equivariant cohomology ring by using combinatorial way, namely by using GKM theory. By \lq\lq nice" we mean that the $T$-action on $M$ is GKM, namely the fixed point set of the $T$-action $M^T$ is a finite set and the equivariant one-skeleton of $M^T$ is a union of points or 2-spheres. Then we construct a graph $\mG_M$ by replacing fixed points with vertices and 2-spheres with edges. 
This graph equipped with more information is defined by Guillemin and Zara \cite{gu-za01} to be the GKM graph associated with $(M, T)$.  
The object we study in this manuscript, the flag manifold of type $G_2$ with a standard maximal torus action, is GKM. 

Interestingly, we can describe the equivariant cohomology ring of $M$ by using the data of the GKM graph.  
 The equivariant cohomology ring of $M$ is defined to be the ordinary cohomology ring of Borel construction of $M$, namely $$H^*_T(M\ ;\mathbb{Z}):=H^*(ET\times _T M \ ; \mathbb{Z})$$ 
 where $ET$ is the total space of the universal principal $T$-bundle $ET \rightarrow BT$.  
 Since $T$ acts on $ET$, we can consider the diagonal $T$-action on $ET \times M$, and its orbit space $ET \times _T M$ is called the Borel construction of $M$.  
 In this paper, we treat the equivariant cohomology ring with $\mathbb{Z}$-coefficients, so we abbreviate $H^*_T(M\ ; \mathbb{Z})$ as $H^*_T(M)$.  
 Since a GKM space satisfies the condition that $H^*_T(M)$ is torsion free as a module over $H^*(BT)$, the restriction map $$\iota^* : H^*_T(M) \rightarrow  H^*_T(M^T)$$ is injective. 
Moreover, $M^T$ is a finite set, hence $M^T$ is isolated, so we have 
$$H^*_T(M^T) \cong \bigoplus_{p \in M^T}H^*_T(p) \cong \bigoplus _{p \in M^T} H^*_T(BT) \cong  \bigoplus_{p \in M^T}\mathbb{Z}[\t_1, \cdots, \t_n],$$
where $n = \dim T$. 
Therefore, we shall regard $H^*_T(M)$ as a subring of $\bigoplus_{p \in M^T}\mathbb{Z}[\t_1, \cdots, \t_n]$ through the map $\iota^*$.  
Guillemin and Zara defined the subring, denoted by $H^*_T(\mathcal{G}_M)$, of $ \bigoplus_{p \in M^T}\mathbb{Z}[\t_1, \cdots, \t_n]$, by using the combinatorial data of the graph $\mG_M$.  
Then, according to the result of Goresky-Kottwitz-MacPherson in \cite{go-ko-ma98}, we have 
$$H^*_T(M) \otimes \mathbb{Q} \cong H^*_T(\mathcal{G}_M) \otimes \mathbb{Q}.$$ 
If $M$ is a flag manifold then $H^*_T(M)$ is isomorphic to $H^*_T(\mathcal{G}_M)$ without tensoring with $\mathbb{Q} $ (see \cite{ha-he-ho05}, for example). 
Namely, by using data of the GKM graph, we can compute $H^*_T(M)$. 

Our purpose is to determine the ring structure of $H^*_T(\mathcal{G}_M)$ for a flag manifold $M$ of different Lie types. 
 In the paper \cite{fu-i-ma}, we computed the ring structure of $H^*_T(\mathcal{G}_M)$ for a flag manifold $M$ of classical type  directly, namely without using  the fact that $H^*_T(\mathcal{G}_M) \cong H^*_T(M)$, and our computation of $H^*_T (\mG_M)$
confirms that $H^*_T (M)$ is isomorphic to $H^*_T (\mG_M)$. 
The goal of this paper is to similarly determine the ring structure of $H^*_T(\mathcal{G}_M)$ for a flag manifold of type $G_2$. 
The ring structure of $H^*_T(\mathcal{G}_M)$ is given by the following theorem. 
\begin{theo}\label{premaintheo}
Let $\mG_2$ be the labeled graph associated with the flag manifold of type $G_2$. Then 
\begin{equation*}
H^*_T(\mG_2) = \Z[\tau_1,\tau_2,\tau_3,\t_1,\t_2,\t_3,f]/I,
\end{equation*}
where $I=(e_1(\tau), e_2(\tau)-e_2(\s), 2f-e_3(\tau)-e_3(\s), f^2-fe_3(\s) )$, and $e_i(\tau)$ $($resp. $e_i(s)$ $)$ is the $i^{th}$ elementary symmetric polynomial in $\tau_1,\tau_2,\tau_3$ 
$($resp. $s_1:=\t_1-\t_2 ,s_2:=\t_2-\t_3 ,s_3:=\t_3-\t_1$ $)$.
\end{theo}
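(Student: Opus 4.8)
The plan is to realize the four relations concretely as identities among explicit functions on the vertex set, assemble them into a ring homomorphism out of the abstract quotient, and then prove that homomorphism is an isomorphism by exhibiting a $\Z[t_1,t_2,t_3]$-module basis on each side. Throughout I regard an element of $H^*_T(\mG_2)$ as a function $p\colon W\to\Z[t_1,t_2,t_3]$ on the vertex set $W=W(G_2)\cong S_3\times\{\pm1\}$ (twelve vertices) subject to the GKM congruence $\alpha_e\mid p(w)-p(w')$ across every edge $e=(w,w')$, where $\alpha_e$ is the associated root. I will use the standard fact, provable combinatorially from the Bruhat/Schubert structure of $\mG_2$, that $H^*_T(\mG_2)$ is a free $\Z[t_1,t_2,t_3]$-module of rank $12=|W(G_2)|$.

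First I would write down the generators as functions. Set $\tau_i(w):=w(s_i)$, the image of the short root $s_i$ under $w$, take the $t_j$ to be the constant functions, and let $f$ be the function equal to $e_3(s)=s_1s_2s_3$ on the vertices where the sign character $\chi$ (the product of the sign character of $S_3$ with the nontrivial character of the $\{\pm1\}$-factor) equals $+1$, and equal to $0$ elsewhere. The congruences for $\tau_i$ are immediate, since $w(s_i)-ws_\alpha(s_i)=\langle s_i,\alpha^\vee\rangle\,w(\alpha)$ is a multiple of the label. The one delicate point is that $f$ is an honest \emph{integral} class: $f$ jumps precisely across the edges on which $\chi$ changes sign, and these are exactly the short-root edges, across which the jump $\pm e_3(s)=\pm s_1s_2s_3$ is divisible by the short-root label. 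I would then verify the four relations directly: $e_1(\tau)(w)=w(e_1(s))=0$ and $e_2(\tau)(w)=w(e_2(s))=e_2(s)$, because $e_1(s)=0$ and $e_2(s)$ is $W$-invariant, while $e_3(\tau)(w)=w(e_3(s))=\chi(w)\,e_3(s)$, because $e_3(s)$ is the alternating Vandermonde-type polynomial; hence $2f=e_3(\tau)+e_3(s)$ and $f^2=fe_3(s)$ follow from $\chi(w)\in\{\pm1\}$. This yields a well-defined ring homomorphism $\phi\colon R:=\Z[\tau_1,\tau_2,\tau_3,t_1,t_2,t_3,f]/I\to H^*_T(\mG_2)$.

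Next I would extract an explicit module basis of the source. Using $e_1(\tau)=0$ to eliminate $\tau_3$, the relation $e_2(\tau)=e_2(s)$ to reduce any power $\tau_2^2$, the relation $e_3(\tau)=2f-e_3(s)$ to reduce $\tau_1^3$, and $f^2=fe_3(s)$ to reduce $f^2$, one sees that $R$ is spanned over $\Z[t_1,t_2,t_3]$ by the twelve monomials $\tau_1^a\tau_2^bf^c$ with $0\le a\le2$, $0\le b\le1$, $0\le c\le1$. A generic-fibre count confirms the count is sharp: over the fraction field the equation $f(f-e_3(s))=0$ splits the spectrum into the locus $f=e_3(s)$, where the $\tau_i$ are the three roots $\{s_1,s_2,s_3\}$ of $X^3+e_2(s)X-e_3(s)$, and the locus $f=0$, where they are $\{-s_1,-s_2,-s_3\}$, giving $6+6=12$ points, matching $|W(G_2)|$. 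It therefore suffices to prove that the twelve images $\phi(\tau_1^a\tau_2^bf^c)$ form a $\Z[t_1,t_2,t_3]$-basis of $H^*_T(\mG_2)$: granting this, the twelve monomials span $R$ and map to a basis of a free module, so they are themselves a basis of $R$ and $\phi$ is an isomorphism.

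The main obstacle is exactly this integral basis statement. I would evaluate the twelve monomials at the twelve fixed points to obtain a $12\times12$ matrix $M$ over $\Z[t_1,t_2,t_3]$, block-structured because $f$ is supported on only half the vertices, and compare $\det M$ with the invariant determinant attached to any $\Z[t_1,t_2,t_3]$-basis of $H^*_T(\mG_2)$ (for instance a Schubert-type basis), which is well defined up to the units $\pm1$. Linear independence over the fraction field is the easy half, amounting to $\det M\neq0$; the real content is that the two determinants agree up to sign, equivalently that the transition matrix lies in $GL_{12}(\Z[t_1,t_2,t_3])$, which is precisely the assertion that no denominators appear. This is where the class $f$ earns its place: the coefficient $2$ in $2f=e_3(\tau)+e_3(s)$ shows that $\tfrac12(e_3(\tau)+e_3(s))$ is already an integral graph class, and replacing it by $f$ is exactly what removes the index-$2$ discrepancy that $\Z[\tau,t]$ alone would leave, making the transition unimodular. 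I expect this determinant comparison, carried out by exploiting the $S_3\times\{\pm1\}$ symmetry and the splitting of the vertices induced by $\chi$, to be the bulk of the work.
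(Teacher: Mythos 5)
There are two genuine problems. First, a concrete error in the setup: you assert that $e_3(\s)=\s_1\s_2\s_3$ is ``the alternating Vandermonde-type polynomial'' and hence that $w(e_3(\s))=\chi(w)e_3(\s)$ for $\chi=\mathrm{sign}(v)\cdot\varepsilon$. In fact $e_3(\s)$ is the product of the three short roots, and it is \emph{symmetric} under the permutation part of the action: a long-root reflection $\si_{\s_i-\s_j}$ permutes $\{\s_1,\s_2,\s_3\}$ without signs, while $\rho=-\mathrm{id}$ contributes $(-1)^3=-1$, so $w(e_3(\s))=\varepsilon\, e_3(\s)$ where $\varepsilon$ is the $\{\pm\}$-coordinate alone. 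Consequently $f=\tfrac12(e_3(\tau)+e_3(\s))$ must be supported on $\{\varepsilon=+\}$ (one of the two $A_2$-hexagons, as in the paper's \eqref{fdef}), not on $\{\mathrm{sign}(v)\varepsilon=+\}$. With your $\chi$ the class $f$ jumps across \emph{long}-root edges, whose labels $\s_i-\s_j$ do not divide $\s_1\s_2\s_3$, so your $f$ fails the GKM condition, and the relation $2f=e_3(\tau)+e_3(\s)$ fails at, e.g., $(v,+)$ with $v$ a transposition. This is repairable by replacing $\chi$ with $\varepsilon$, but as written the verification of the relations collapses.

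Second, and more seriously, the core of your argument is not carried out. You take as input that $H^*_T(\mG_2)$ is a free $\Z[\t_1,\t_2,\t_3]$-module of rank $12$ admitting a Schubert-type basis, and you reduce the theorem to showing a $12\times 12$ transition determinant is a unit --- which you explicitly defer (``I expect this determinant comparison \dots to be the bulk of the work''). That unimodularity statement \emph{is} the theorem's integral content; the generic-fibre count only gives the result after tensoring with $\Q$. Moreover, importing an integral Schubert basis of the graph cohomology essentially presupposes the kind of computation the paper is trying to do from scratch. The paper closes this gap combinatorially: Lemma~\ref{surj} proves surjectivity of $\Z[\tau,\t,f]\to H^*_T(\mG_2)$ by an explicit subtraction algorithm over the vertex sets $V^-_i$ and $V^+_i$, Lemma~\ref{inje} extracts the Hilbert series of $H^*_T(\mG_2)$ from the same algorithm, and Lemma~\ref{gene} gives the twelve module generators of the quotient (the same twelve monomials you find); comparing Hilbert series then forces the surjection to be an isomorphism and yields freeness as a by-product rather than as a hypothesis. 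To complete your proof you would need either to execute the determinant comparison over $\Z[\t_1,\t_2,\t_3]$ in full, including an independent construction of the integral basis you compare against, or to replace that step by a direct rank/surjectivity count as in the paper.
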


To prove Theorem~\ref{premaintheo} we will use the computation of the graph cohomology of type $A_2$, because the labeled graph of type $G_2$ contains two labeled subgraphs isomorphic to the labeled graph of type $A_2$.  
In fact, the labeled graph of type $G_2$ can be viewed as the total space of a \lq\lq GKM fiber bundle" in the sense of Guillemin, Sabatini and Zara \cite{gu-sa-za} where the type $A_2$ subgraph is the fiber, although we do not use this perspective in our computation. 

This paper is organized as follows.   
In Section 2, we recall the labeled graph of a flag manifold. 
The labeled graph is a graph with weight attached to each edge. 
In Section 3, we give a different description of the Weyl group of type $G_2$ (the vertex set of the labeled graph) which allows us to describe the labeled graph more concretely. 
In Section 4, we compute the graph cohomology ring of the labeled graph.  



\section{The labeled graph $\mG_M$}\label{sec2}
 In this section, we recall the definition of the labeled graph $\mG_M$ 
 for a flag manifold $M$. 
For an $n$-dimensional torus $T$, let $\{ \t_i \}_{i=1}^n$ be a basis of $H^2(BT)$, so that $H^*(BT)$ can be identified with the polynomial ring $\Z[\t_1, \t_2, \cdots , \t_n]$.
 We take an inner product on $H^2(BT)$ such that the basis $\{ \t_i \}_{i=1}^n$ is orthonormal. 
 The following is a simplified version of the definition of GKM graph given in \cite{gu-za01}. To distinguish our graph from theirs, we call ours a labeled graph. 

\begin{defi}(See 2.2, \cite{gu-ho-za06})\label{def2}
Let $M$ be a flag manifold of classical type or exceptional type, namely a flag manifold $M$ is a homogeneous space $G/T$ where $G$ is a compact Lie group and $T$ is a maximal torus of $G$. 
Suppose that $\Phi(G)$ be the root system of that type  and $W(G)$ be the Weyl group. (We regard  $\Phi(G)$  as a subset of $H^2(BT)= \{\sum_{i=1}^{n} a_i \t_i  \mid \  a_i \in \Z \}$. ) The labeled graph $\mG_M$ has $W(G)$ as a vertex set. Two vertices $\w$ and $\w'$ in $W(G)$ are connected by an edge $e$ if and only if there is an element $\alpha$ in $\Phi(G)$ such that $\w=\w' \si_\alpha$, where $\si_\alpha$ is the reflection determined by $\alpha$. The label of the edge $e$, denoted by $\l(e)$, is given by $\w \alpha$. 
 \end{defi} 
\begin{rema}\label{signoflabel}
Guillemin and Zara \cite{gu-za01} introduced the notion of a GKM graph which is a graph equipped with an axial function which satisfies a certain compatibility condition.  They defined the graph cohomology for a GKM graph, but this definition does not use the compatibility condition of the axial function.  One can also see that the  graph cohomology is independent of the signs of the labels. Therefore, we omit the axial function in the data in definition~\ref{def2}, and will often disregard the signs of the labels on our labeled graph. 

 \end{rema}

\begin{exam}[$A_2$ type]
Let $M$ be the flag manifold of type $A_{2}$, namely $M=U(3)/T$ where $T$ is the maximal torus of $U(3)$. Then, the root system $\Phi(A_{2})$ is $\{ \pm(\t_i-\t_j) \ \mid \ 1 \leq  i <  j  \leq  3 \}$ and the Weyl group is the permutation group $S_3$ on three letters. We use the one-line notation $v=v(1)v(2)v(3)$ for permutations.  We denote by $\mathcal{A}_{3}$ the labeled graph associated with $\Phi(A_{2})$. It is shown in Figure~\ref{a3}.
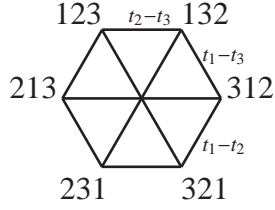
\begin{figure}
\begin{center}
\begin{picture}(130,50)
 \linethickness{1pt}
 \put(5,0){\line(100,173){15}}
 \put(20,25.95){\line(1,0){30}}
 \put(50,25.95){\line(100,-173){15}}
 \put(65,0){\line(-100,-173){15}}
 \put(50,-25.95){\line(-1,0){30}}
 \put(5,0){\line(100,-173){15}}
 \put(5,0){\line(1,0){60}}
 \put(20,25.95){\line(200,-346){30}}
 \put(20,-25.95){\line(200,346){30}}
  \put(-15,0){213}
  \put(2,28){123}
  \put(50,28){132}
  \put(67,0){312}
  \put(50,-38){321}
  \put(4,-38){231}
  \put(30,29){$\scriptstyle\t_2-\t_3$}
  \put(58,14){$\scriptstyle\t_1-\t_3$}
  \put(58,-20){$\scriptstyle\t_1-\t_2$}
\end{picture}
 \ \\ \  \\ \  
 \caption{$\mathcal{A}_3$}
 \label{a3}
\end{center}
 \end{figure}
\end{exam}

\section{Labeled graph of type $G_2$}\label{sectgraph}
In this section we concretely describe the labeled graph of the flag manifold associated to the compact Lie group of exceptional type $G_2$. Then the root system of type $G_2$ is known to be  
\begin{equation*} 
\Phi(G_2) := \{\pm (\t_{i_1} - \t_{i_2}), \pm (2\t_i -\t_j - \t_k) \  \mid 1 \leq i_1 < i_2 \leq 3, \{  i,j,k \}=[3] \}, 
\end{equation*}
where $[3]:=\{1,2,3 \}$.   
Let $\s_1=\t_1-\t_2$, $\s_2=\t_2 -\t_3$ and $\s_3 =\t_3 -\t_1$, then 
\begin{equation*}
 \Phi(G_2) = \{\pm \s_k, \pm (\s_i -\s_j) \ | \ k \in [3], 1 \leq i < j \leq 3 \}, 
 \end{equation*}
so it is easy to see that $\Phi(G_2)$ has $\Phi(A_2)$ as a subset (but $s_i$'s play a role of $t_i$'s).  
We denote by $\mG_2$ the labeled graph associated with $\Phi(G_2) $. The graph $\mG_2$ has the Weyl group $W(G_2)$ of type $G_2$ as the vertex set. 
Let $\alpha_1=\s_1$ and $\alpha_2=\s_3-\s_1$ be the simple roots, then $W(G_2)$ has a presentation 
\begin{equation}\label{eq1}
<\sigma_1, \sigma_2 \mid {\sigma_1}^2={\sigma_2}^2=(\sigma_1 \sigma_2)^6=1>,
\end{equation} 
where $\sigma_i$ is the reflection defined by $\alpha_i$ for $i=1,2$. It is a dihedral group of order 12. 

We shall give another description of $W(G_2)$ as a set not as a group, which turns out to be convenient for our purpose, and rewrite the condition about edges and labels. 
Let $$\Phi=\{ \pm (\s_i -\s_j) \mid 1 \leq i < j \leq 3  \} \ \subset \Phi(G_2).$$ 
Let $W(\Phi)$ be the reflection group determined by $\Phi$, namely 
$$W(\Phi)=<\si_1 \si_2 \si_1, \si_2> $$
because the reflections $\sigma_{ s_i-s_j}$ determined by the roots $ s_i-s_j$ in $\Phi$ are given by $$\sigma_{ s_3-s_1}=\sigma_2, \  
\sigma_{ s_1-s_2} =\sigma_1 \sigma_2 \sigma_1 \quad \text{and} \quad  \sigma_{ s_2-s_3}=(\sigma_1 \sigma_2)^3 \sigma_1$$　
It follows from \eqref{eq1} that the relations
$${\si_2}^2= (\si_1 \si_2 \si_1)^2=(\si_1 \si_2 \si_1 \cdot   \si_2)^3=1$$
hold, so we can identify $W(\Phi)$ with $W(A_3)=S_3$.  We choose a group isomorphism $\psi$ between  $W(\Phi)$ and $S_3$ as follows;
\begin{equation}\label{psi} 
\begin{array}{ccccc}
\psi  & : & W(\Phi) &\cong & S_3 \\
\  & \  & \si_{ s_i-s_j} & \longmapsto & (i,j)
\end{array}
\end{equation} 
where $(i,j)$ is the transposition of $i$ and $j$.
We note that $$W(G_2)=W(\Phi) \amalg \rho W(\Phi) \quad \text{as a set,}$$ where $\rho :=(\si_1 \si_2)^3$. (Note that $\rho$ is the rotation by angle $\pi$. ) 
We record the preceding discussion in a lemma.  
\begin{lemm}
Let $\Psi$ be the map from $W(G_2)$ to $S_3\times \{\pm\}$ defined as follows; for any $\w$ in $W(\Phi)$, 
$$\Psi (\w):=(\psi(w), +) \quad \text{and} \quad \Psi (\rho \w):=(\psi(\w), -).$$ 
Then $\Psi$ is bijective, so that one can identify $W(G_2)$ with $S_3\times\{\pm\}$ as a set  through the map $\Psi$. 
\end{lemm}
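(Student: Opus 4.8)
The plan is to exploit the two structural facts already in place: that $\psi \colon W(\Phi) \to S_3$ is a group isomorphism, hence in particular a bijection, and that $W(G_2) = W(\Phi) \amalg \rho W(\Phi)$ as a set. Because this decomposition is a \emph{disjoint} union, the two clauses defining $\Psi$ can never assign competing values to the same element, so the only well-definedness issue lies on the second piece. I would first record that left multiplication by $\rho$ is a bijection of $W(\Phi)$ onto the coset $\rho W(\Phi)$; consequently every element of $\rho W(\Phi)$ admits a \emph{unique} expression $\rho \w$ with $\w \in W(\Phi)$, which makes the assignment $\Psi(\rho \w) := (\psi(\w), -)$ unambiguous.

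For injectivity I would read off the sign coordinate first. If $\Psi(g_1) = \Psi(g_2)$, then the $\pm$–component and the disjointness of the two cosets force either $g_1, g_2 \in W(\Phi)$ or $g_1, g_2 \in \rho W(\Phi)$. In the first case $\psi(g_1) = \psi(g_2)$ gives $g_1 = g_2$ by injectivity of $\psi$. In the second case, writing $g_i = \rho \w_i$, the equality $\psi(\w_1) = \psi(\w_2)$ yields $\w_1 = \w_2$ and hence $g_1 = \rho \w_1 = \rho \w_2 = g_2$.

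Surjectivity is then immediate: given $(\tau, \epsilon) \in S_3 \times \{\pm\}$, choose $\w \in W(\Phi)$ with $\psi(\w) = \tau$ using surjectivity of $\psi$, and set $g = \w$ when $\epsilon = +$ and $g = \rho \w$ when $\epsilon = -$, so that $\Psi(g) = (\tau, \epsilon)$. Alternatively, since $|W(G_2)| = 12 = 6 \cdot 2 = |S_3 \times \{\pm\}|$, injectivity of a map between finite sets of equal cardinality already forces bijectivity, so either half suffices.

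There is essentially no hard step here: the statement is a bookkeeping consequence of the coset decomposition together with the isomorphism $\psi$. The only point that genuinely requires a line of care is the well-definedness of $\Psi$ on $\rho W(\Phi)$, namely the uniqueness of the representative $\w$ in $g = \rho \w$; this holds precisely because $\rho$ is a single fixed group element and left translation by it is a bijection.
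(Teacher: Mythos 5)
Your proof is correct and follows the same route the paper takes implicitly: the lemma is recorded there as an immediate consequence of the decomposition $W(G_2)=W(\Phi)\amalg\rho W(\Phi)$ and the bijectivity of $\psi$, which is exactly what you use. Your write-up merely makes the well-definedness, injectivity, and surjectivity checks explicit.
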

By using the bijection $\Psi$, we can concretely describe the edge and the label of the graph $G_2$.  
The following lemma tells us the way to find the label $\w \alpha$ in the Definition~\ref{def2} more concretely. 
\begin{lemm} \label{lem1}
For any $\w_1$ and $\w_2$ in $W(G_2)$ connected by an edge $e_{\w_1, \w_2}$ labeled by $\w_1 \alpha$ for some $\alpha $ in $\Phi(G_2)$, namely $\w_1=\w_2 \si_\alpha$, one of the following occurs.\\
Case 1: both $\w_1$ and $\w_2$ are in $W(\Phi)$. In this case there are distinct integers $i$ and $j$ in $[3]$
 such that $\psi(\w_1)(i)=\psi(\w_2)(j), \psi(\w_1)(j)=\psi(\w_2)(i)$ and $\l(e_{\w_1,\w_2})=\s_{\psi(\w_1)(i)}-\s_{\psi(\w_1)(j)}$. \\
Case 2: both $\w_1$ and $\w_2$ are in $\rho W(\Phi)$ so that there are unique elements $\w_k'$ in $W(\Phi)$ such that $\w_k= \rho \w_k'$ for $k=1,2$. In this case there are distinct integers $i$ and $j$ in $[3]$
 such that $\psi(\w_1')(i)=\psi(\w_2')(j), \psi(\w_1')(j)=\psi(\w_2')(i)$ and $\l(e_{\w_1,\w_2})=\s_{\psi(\w_1')(i)}-\s_{\psi(\w_1')(j)}$.\\
Case 3: one of $\w_1$ and $w_2$ is in $W(\Phi)$ and the other is in $\rho W(\Phi)$. Without loss of generality, we may assume $\w_1 \in W(\Phi)$. Then there is an element $\w_2'$ in $W(\Phi)$ such that $\w_2=\rho \w_2'$. In this case there are distinct integers $i$ and $j$ in $[3]$
 such that $\psi(\w_1)(i)=\psi(\w_2')(j), \psi(\w_1)(j)=\psi(\w_2')(i)$ and $\l(e_{\w_1,\w_2})=\s_{\psi(\w_1)(k)}$, where $k \in [3] \setminus \{ i,j \}$.
\end{lemm}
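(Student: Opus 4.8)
The plan is to reduce all three cases to a single structural analysis of the reflections of $W(G_2)$ and how they interact with the decomposition $W(G_2) = W(\Phi) \amalg \rho W(\Phi)$. The starting observation is that an edge is the same datum as a reflection: two vertices $w_1, w_2$ are joined if and only if $\sigma_\alpha = w_1^{-1} w_2$ is one of the six reflections $\sigma_\alpha$ with $\alpha \in \Phi(G_2)$, and then the label is $w_1 \alpha$ up to sign by Remark~\ref{signoflabel}. So determining the edge and its label amounts to deciding which reflection $w_1^{-1} w_2$ is, and then evaluating $w_1$ on the associated root. Since each of $w_1, w_2$ lies in one of the two cosets, the product $w_1^{-1} w_2$ lies in $W(\Phi)$ exactly when $w_1, w_2$ lie in the same coset and in $\rho W(\Phi)$ otherwise; this is precisely the trichotomy of the statement, with Case~1 and Case~2 corresponding to $w_1^{-1}w_2 \in W(\Phi)$ and Case~3 to $w_1^{-1}w_2 \in \rho W(\Phi)$.

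Before the case analysis I would record three facts. First, the \emph{action formula}: for $w \in W(\Phi)$ one has $w \cdot s_c = s_{\psi(w)(c)}$. It suffices to check this on the generating transpositions, i.e. that $\sigma_{s_i - s_j}$ interchanges $s_i$ and $s_j$ and fixes $s_k$, which is a one-line reflection computation in the inner product for which $\{\t_1,\t_2,\t_3\}$ is orthonormal, and which agrees with $\psi$ by the very definition of $\psi$. Second, the six reflections split by coset: the three $\sigma_{s_i - s_j}$ generate and hence lie in $W(\Phi)$, while the three $\sigma_{s_k}$ lie in $\rho W(\Phi)$; as $\Phi(G_2)$ has exactly six positive roots, these exhaust the reflections. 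Third, $\rho = (\sigma_1\sigma_2)^3$ is central with $\rho^2 = 1$ and acts as $-\mathrm{id}$ on the two-dimensional span of the roots, so left multiplication by $\rho$ merely negates a label and thus, by Remark~\ref{signoflabel}, does not change it.

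The crucial identity, which really drives Case~3, is
\[
\rho\,\sigma_{s_i - s_j} = \sigma_{s_k}, \qquad \{i,j,k\} = [3].
\]
I would prove it by the explicit computation that $\sigma_{s_k}$ sends $s_i \mapsto -s_j$, $s_j \mapsto -s_i$ and $s_k \mapsto -s_k$ (again using $s_1+s_2+s_3 = 0$ and the orthonormal $\t$-basis), so that $-\sigma_{s_k} = \rho\,\sigma_{s_k}$ is exactly the transposition of $s_i$ and $s_j$, namely $\sigma_{s_i - s_j}$. This is the source of the otherwise mysterious passage from the transposed pair $\{i,j\}$ to the fixed index $k$ appearing in the label in Case~3. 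With these facts in hand the three cases become short. In Case~1 both vertices lie in $W(\Phi)$, so $w_1^{-1}w_2 = \sigma_{s_i-s_j}$ for some $i,j$; applying $\psi$ gives $\psi(w_2) = \psi(w_1)\,(i\,j)$, which is the stated swap condition, and the action formula gives the label $w_1(s_i - s_j) = s_{\psi(w_1)(i)} - s_{\psi(w_1)(j)}$. Case~2 is identical after writing $w_k = \rho w_k'$ and using centrality and $\rho^2 = 1$ to get $w_1^{-1}w_2 = (w_1')^{-1}w_2' \in W(\Phi)$, the factor $\rho$ contributing only a sign to the label. In Case~3 the vertices lie in different cosets, so $w_1^{-1}w_2 = \rho\,w_1^{-1}w_2'$ is one of the $\sigma_{s_k}$; matching with the crucial identity forces $w_1^{-1}w_2' = \sigma_{s_i-s_j}$, giving the swap condition on $(\psi(w_1),\psi(w_2'))$ and $\alpha = \pm s_k$ with $k$ the fixed index, whence the label $s_{\psi(w_1)(k)}$.

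The only genuine obstacle is the pair of explicit reflection computations underpinning the action formula and the identity $\rho\,\sigma_{s_i-s_j} = \sigma_{s_k}$; once these are settled, everything else is bookkeeping with the cosets and with the homomorphism $\psi$, together with the freedom to ignore signs justified by Remark~\ref{signoflabel}.
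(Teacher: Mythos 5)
Your proposal is correct and follows essentially the same route as the paper: the swap condition comes from $\psi$ being a group isomorphism, the labels from the explicit reflection computation $\sigma_\beta\gamma=\gamma-2\frac{\beta\cdot\gamma}{\beta\cdot\beta}\beta$ checked on generators, and Case 3 from the identity $\rho\,\sigma_{s_k}=\sigma_{s_i-s_j}$ together with the fact that $\rho$ acts as $-\mathrm{id}$ on the root span. The only difference is organizational — you isolate and actually prove the identity $\rho\,\sigma_{s_i-s_j}=\sigma_{s_k}$, which the paper states without proof, and your Case 3 bookkeeping via $w_1^{-1}w_2$ is slightly more direct than the paper's manipulation $w_1=\rho\,\sigma_{w_2'(\alpha)}w_2'$.
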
 
\begin{proof}
From the definition of $\psi$, we have $\psi(\si_{s_i-s_j})=(i,j)$ for $\{i,j\} \subset [3]$. 
Since the graph cohomology is independent of the signs of the label, we do not need to be careful of signs of labels of the labeled graph when we consider the graph cohomology ring of the labeled graph. Therefore, in this proof, we sometime disregard signs in front of roots.  

First, we prove the lemma for cases 1 and 2. In these cases $\alpha$ is in $\Phi$,  so $\alpha=s_i-s_j$ for some distinct $i, \ j$ in $[3]$.

Case 1. By assumption $\w_1$ and $\w_2$ are in $W(\Phi)$. Remember that $\w_1=\w_2 \si_{\alpha}$ and $\alpha =s_i-s_j$. Since $\psi $ is a group isomorphism, we have 
\begin{equation*}
\psi(\w_1) = \psi(\w_2)\psi(\si_{\alpha}) =\psi(\w_2)\psi(\sigma_{s_i-s_j})= \psi(\w_2)(i,j).
\end{equation*} 
Therefore $\psi(\w_1)(i)=\psi(\w_2)(j), \psi(\w_1)(j)=\psi(\w_2)(i)$. 
In addition,  since $\alpha =s_i- s_j$, we have  
$\ell (e_{w_1, w_2})=\w_1\alpha=\w_1(s_i-s_j)$, so in order to show $\ell (e_{w_1 ,w_2})=s_{\psi (w_1)(i)}-s_{\psi (w_1)(j)}$, it is enough to show that 
\begin{equation}\label{w_1}
 \w_1(s_i-s_j)=s_{\psi(\w_1)(i)}-\s_{\psi(\w_1)(j)}.
 \end{equation}
To prove this, it is enough to treat the case when $\w_1=\si_2$ or $\si_1 \si_2 \si_1$, because $\si_2$ and $\si_1 \si_2 \si_1$ are the generators of $W(\Phi)$. 
Since $\si_2=\si_{s_3-s_1}$ and $\si_1 \si_2 \si_1= \si_{\s_1- \s_2}$, we can check \eqref{w_1} this easily.  In fact, for  
any two roots $\beta $ and $\gamma $,  $\sigma_\beta \gamma$ is given by $\gamma - 2\frac{\beta \cdot \gamma}{\beta \cdot \beta } \beta $ where $\cdot $ is the inner product on $H^2(BT)$ which we defined in section~\ref{sec2}.  
Therefore we have 
\begin{equation*}
\sigma_{\s_3-\s_1}(s_i-s_j) = 
\left\{
\begin{array}{ll}
s_3-s_1 & \text{for } \{ i,j \}=\{ 3,1 \} \\
s_2-s_3 & \text{for } \{ i,j \}=\{ 1,2 \} \\
s_1-s_2 & \text{for } \{ i,j \}=\{ 2,3 \} 
\end{array}
\right.
\end{equation*}
while
\begin{equation*}
s_{\psi(\sigma_{\s_3-\s_1})(i)} - s_{\psi(\sigma_{\s_3-\s_1})(j)} = s_{(3,1)(i)} -s_{(3,1)(j)} =  
\left\{
\begin{array}{ll}
s_3-s_1 & \text{for } \{ i,j \}=\{ 3,1 \} \\
s_2-s_3 & \text{for } \{ i,j \}=\{ 1,2 \} \\
s_1-s_2 & \text{for } \{ i,j \}=\{ 2,3 \} 
\end{array}
\right.
\end{equation*}
up to sign. Thus, \eqref{w_1} holds when $\w_1=\sigma_2=\sigma_{s_3-s_1}$. 
A similar argument proves \eqref{w_1} for $\w_1=\sigma_1 \sigma_2 \sigma_1$. 

Case 2. By assumption  $\w_k= \rho \w_k'$ for $k=1,2$, where $\w_1'$ and $\w_2'$ are in $W(\Phi)$. Remember that $\w_1=\w_2 \si_{\alpha}$ and $\alpha = s_i -s_j$. Since $\rho \w_1'=\rho \w_2' \sigma_\alpha$,   
\begin{equation*}
\psi(\w_1') = \psi(\w_2')\psi(\si_{\alpha}) = \psi(\w_2')(i,j). 
\end{equation*} 
 Therefore $\psi(\w_1')(i)=\psi(\w_2')(j), \psi(\w_1')(j)=\psi(\w_2')(i)$,  
and we have 
$$\ell (e_{w_1 ,w_2}) = \w_1 \alpha=\rho \w_1'(s_i-s_j)=\rho (\s_{\psi(\w_1')(i)} - \s_{\psi(\w_1')(j)}).$$ 
Here $\rho$ preserves $\s_i-\s_j$ up to sign because $\rho$ is the rotation by angle $\pi $, 
so this completes the proof for case (2).

Case 3. By assumption $w_1$ is in $W(\Phi)$ and $w_2 =\rho w_2 '$ with $w_2 ' \in W(\Phi)$. In this case $\alpha$ is not in $\Phi$, namely $\alpha =\s_{k''}$ for some $k'' \in [3]$. 
 We have  
\begin{equation}\label{lempr3-1}
\w_1=w_2\sigma_\alpha=\rho \w_2' \si_\alpha = \rho \si_{\w_2'(\alpha)} \w_2'.
\end{equation}
This equation means $ \si_{\w _2'(\alpha)} \in \rho W(\Phi)$ because $\w_1$ and $\w_2'$ are in $W(\Phi)$. If $\w _2'(\alpha) \in \Phi$, then  $\si _{\w_2'(\alpha)}$ is  in $W(\Phi)$ and this is contradiction, so there is some $k' \in [3]$ such that 
\begin{equation}\label{lempr3-2}
\w_2'(\alpha) = \s_{k'}
\end{equation}\label{tuika}
up to sign.  
The label of the edge which connects $\w_1$ and $\w_2$ is 
\begin{equation}
\w_1(\alpha) =w_2\sigma_{\alpha}(\alpha)= \rho \w_2'\si_\alpha (\alpha) =\rho w_2'(\alpha) = \rho \s_{k'} = \s_{k'},
\end{equation}  
up to sign.

On the other hand, it follows from \eqref{lempr3-1} and \eqref{lempr3-2} that 
\begin{equation}\label{rho1}
\psi(\w_1) = \psi (\rho \si_{\w_2'(\alpha)} \w_2') = \psi (\rho \si_{\s_{k'}}) \psi ( \w_2'). 
\end{equation} 
Since  
\begin{equation}\label{rho2}
\rho \si_{\s_{k'}}=\si_{\s_{i'}-\s_{j'}} \quad \text{for} \{i', j' \}=[3]\setminus \{ k'\} ,  
\end{equation}
 it follows from \eqref{rho1} and \eqref{rho2} that 
\begin{eqnarray*}
\psi (\w_1) &=& \psi(\rho \si_{s_k'}) \psi(\w_2') = \psi(\si_{s_{i'}-s_{j'}}) \psi(\w_2') \\
           &=& (i',j') \psi (\w_2') = \psi (\w_2') (\psi(\w_2')^{-1}(i'), \psi(\w_2')^{-1}(j')).
\end{eqnarray*}
Let $i:=\psi(\w_2')^{-1}(i')$, $j:=\psi(\w_2')^{-1}(j')$. Then we have $\phi (w_1) (i)=\phi (w_2')(i,j)(i)=\phi (w_2')(j)$ and $\phi (w_1) (j)=\phi (w_2')(i,j)(j)=\phi (w_2')(i)$.
Let $k \in [3] \setminus \{ i,j \}$, namely $k=\psi (w_2')^{-1}(k')$,  
then we have $\psi(w_1)(k)=\phi (w_2')(i,j)(k)= \phi (w_2')(k)=k'$. 
This together with \eqref{lempr3-2} completes the  proof of case (3).  
\end{proof}
Using the above lemma, we can redescribe the labeled graph associated with the root system $\Phi(G_2)$, denoted by $\mG_2$, as follows; \\
\begin{itemize}
\item The vertex set $V(\mG_2)$ is $\{(v, \varepsilon) \  \mid \  v \in S_3, \varepsilon = + ,-\}$. 
\item $\w_1=(v_1,\varepsilon_1) $ and $\w_2=(v_2,\varepsilon_2) $ are connected by an edge   $e_{\w_1,\w_2}$ if and only if there are some integers $i$ and $j$ such that $v_1(i)=v_2(j), v_1(j)=v_2(i)$. 
\item The label of the edge $e_{\w_1,\w_2}$ is $s_{v_1(i)}-s_{v_1(j)}$ if $\varepsilon_1 = \varepsilon_2$, and $s_{v_1(k)}$ if $\varepsilon_1 \not = \varepsilon_2$ where $k \in [3] \setminus \{ i,j \}$.
\end{itemize}
See Figure~\ref{g2}.
\begin{figure}[h]
\begin{minipage}{0.6\hsize}
\includegraphics[width=70mm]{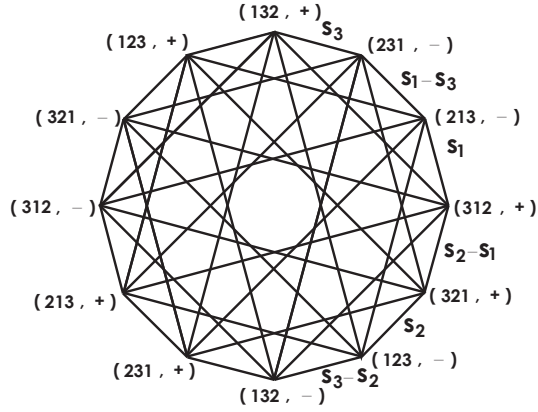}\caption{The labeled graph $\mathcal{G}_2$} \label{g2}
\end{minipage}
\end{figure}
Note that in Figure~\ref{g2}, the parallel edges have the same label. 
\begin{rema}
Let $V^-$ (resp. $V^+$) be a subset of $V(\mG_2)$ defined to be $\{(v, \varepsilon) \mid v \in S_3, \varepsilon = -  \text{(resp. $+$)} \}$, and 
 $\mG_{-}$ (resp. $\mG_{+}$) be the labeled full subgraph of $\mG_2$ with $V^{-}$ (resp. $V^+$) as a vertex set. Clearly $\mG_{-}$ and $\mG_{+}$ are isomorphic as a labeled graph to the labeled graph $\mA_3$ associated with the root system of type $A_2$. 
Guillemin, Sabatini and Zara introduced the notion of a GKM fiber bundle in \cite{gu-sa-za} (in a GKM fiber bundle, the total space, fiber and base space are  all labeled graphs). 
In fact, it can be seen that $\mG_2$ is the total space of a GKM fiber bundle, with fibers isomorphic to $\mA_2$. In this sense it is natural to expect that the result of type $A_2$ plays a role when we determine the ring structure of $\mHT^*(\mG_2)$ below. 
\end{rema}

\section{Graph cohomology ring of $\mG_2$}\label{cohomology}
In this section, we state our main result which describes the ring structure of the graph cohomology ring of the labeled graph $\mG_2$. 
We review the definition of the graph cohomology ring of a labeled graph first. 

\begin{defi} 
\begin{sloppypar}
Let $\mathcal{G}$ be a labeled graph with a label $\ell$ taking values in $H^2(BT)$ and let $W$ be the vertex set of $\mathcal{G}$. 
We identify $\bigoplus _{w \in W} H^*(BT)$ with $\Map(W, H^*(BT))$ where $Map(W, H^*(BT))$ is the set of all maps from $W$ to $H^*(BT)$.
Then the graph cohomology $H^*(\mathcal{G})$ of $\mathcal{G}$ is defined to be the set of all $h \in \Map(W, H^*(BT))$ which  satisfies the so-called \lq\lq GKM condition", namely for any two vertices $\w$ and $\w'$ connected by an edge $e$, $h(\w)-h(\w') $ is divisible by $\l(e)$. The ring structure on $H^*(BT)$ induces a ring structure on $H^*_T(\mathcal{G})$.   
\end{sloppypar} 
\end{defi} 
To become familiar with a graph cohomology ring, we shall remember the graph cohomology ring of $\mA_3$. 
\begin{exam}
We define  elements of $\mHT^*(\mA_3)$ denoted by $\tau_i$'s and $\t_i$'s as follows;  
  \[ \tau_i(v):= \t_{v(i)} \  \  \  \  \  \t_i(v):=\t_i \  \  \  \  \  \text{for any } v \in S_3, \ \  i=1,2,3.  \]
We regard an element in $\Map(W(\mA_3), \Z[\t_1, \t_2 ,\t_3])$ as a set of six polynomials in $\t_1, \t_2, \t_3$ such that each polynomial corresponds to some vertex, because $W(\mA_3)=S_3$ has six verticies. 
So the elements $\tau_i$'s are described as the following figures.
\begin{center}
\ 
\begin{picture}(90,50)
 \linethickness{1pt}
 \put(5,0){\line(100,173){15}}
 \put(20,25.95){\line(1,0){30}}
 \put(50,25.95){\line(100,-173){15}}
 \put(65,0){\line(-100,-173){15}}
 \put(50,-25.95){\line(-1,0){30}}
 \put(5,0){\line(100,-173){15}}
 \put(5,0){\line(1,0){60}}
 \put(20,25.95){\line(200,-346){30}}
 \put(20,-25.95){\line(200,346){30}}
  \put(-10,0){$\t_2$}
  \put(2,28){$\t_1$}
  \put(50,28){$\t_1$}
  \put(67,0){$\t_3$}
  \put(50,-38){$\t_3$}
  \put(15,-38){$\t_2$}
   \put(30,-55){$\tau_1$}
\end{picture}
\ 
\begin{picture}(90,50)
\linethickness{1pt}
 \put(5,0){\line(100,173){15}}
 \put(20,25.95){\line(1,0){30}}
 \put(50,25.95){\line(100,-173){15}}
 \put(65,0){\line(-100,-173){15}}
 \put(50,-25.95){\line(-1,0){30}}
 \put(5,0){\line(100,-173){15}}
 \put(5,0){\line(1,0){60}}
 \put(20,25.95){\line(200,-346){30}}
 \put(20,-25.95){\line(200,346){30}}
  \put(-10,0){$\t_1$}
  \put(2,28){$\t_2$}
  \put(50,28){$\t_3$}
  \put(67,0){$\t_1$}
  \put(50,-38){$\t_2$}
  \put(15,-38){$\t_3$}
   \put(30,-55){$\tau_2$}
\end{picture} 
\ 
\begin{picture}(90,50)
 \linethickness{1pt}
 \put(5,0){\line(100,173){15}}
 \put(20,25.95){\line(1,0){30}}
 \put(50,25.95){\line(100,-173){15}}
 \put(65,0){\line(-100,-173){15}}
 \put(50,-25.95){\line(-1,0){30}}
 \put(5,0){\line(100,-173){15}}
 \put(5,0){\line(1,0){60}}
 \put(20,25.95){\line(200,-346){30}}
 \put(20,-25.95){\line(200,346){30}}
  \put(-10,0){$\t_3$}
  \put(2,28){$\t_3$}
  \put(50,28){$\t_2$}
  \put(67,0){$\t_2$}
  \put(50,-38){$\t_1$}
  \put(15,-38){$\t_1$}
   \put(30,-55){$\tau_3$}
\end{picture}
\ \\ \  \\ \  \\ \  \\ \  \\  \  
\end{center}   
One can easily check that $\tau_i$'s are in $H^*_T(\mA_3)$. 
One can also check that the elements $\t_i$'s and $\tau_i$'s generate $H^*_T(\mA_3)$ as a ring, and  
\[ \mHT^*(\mA_3) \cong \Z[\tau_1, \tau_2, \tau_3, \t_1, \t_2, \t_3]/J \]
where $J=<e_i(\tau)-e_i(\t)  \mid i=1,2,3>$ and $e_i(\tau)$ (resp. $e_i(\t)$)  is the $i^{th}$-elementary symmetric polynomial in $\tau_1, \tau_2, \tau_3$ (resp. $\t_1, \t_2, \t_3$). (See \cite{fu-i-ma}.)
\end{exam}
\begin{sloppypar}
Now we consider elements in $H^*_T(\mG_2)$. 
We set $\s_{w(i)}:=\varepsilon\s_{v(i)}$ for $\w=(v, \varepsilon) \in V(\mG_2)$.
For each $i=1,2,3$, we define elements $\tau_i$, $\t_i$ of $\Map(V(\mG_2), \Z[\t_1, \t_2, \t_3])$ by 
\end{sloppypar}
\begin{equation}\label{tau}
\tau_i(\w):=\s_{\w(i)}=\varepsilon\s_{v(i)}, \ \ \ \t_i(\w):=\t_i \ \ \ \text{for} \ \ \  \w=(v,\varepsilon) \in V(\mG_2 ).
\end{equation}
One can check that $\tau_i$'s, $\t_i$'s are elements of $H^*_T(\mG_2)$.   
We set $\s_i:= \t_i -\t_{i+1}$ in $\Map(V(\mG_2),\Z[\t_1, \t_2, \t_3])$, namely $\s_i(\w)=(t_i-t_{i+1})(\w)=t_i-t_{i+1}=\s_i$, where $\t_4=\t_1$. 
In addition, we define an element $f$ of $\Map(V(\mG_2),\Z[\t_1, \t_2, \t_3])$ by
\begin{equation}\label{fdef}
f(\w) :=\left\{
\begin{array}{ll}
  s_1 s_2 s_3& \text{for} \ \ \  \w=(v, +)\\
  0 & \text{for} \ \ \  \w=(v, -) . \\
\end{array}
\right. 
\end{equation}
Clearly, $f$ is also an element of $H^*_T(\mG_2)$.
Elements $\tau_1$ and $f$ are described in Figures~\ref{figtau} and \ref{figf}.
\begin{figure}[h]
\begin{minipage}{0.4\hsize}
\includegraphics[width=48.1mm]{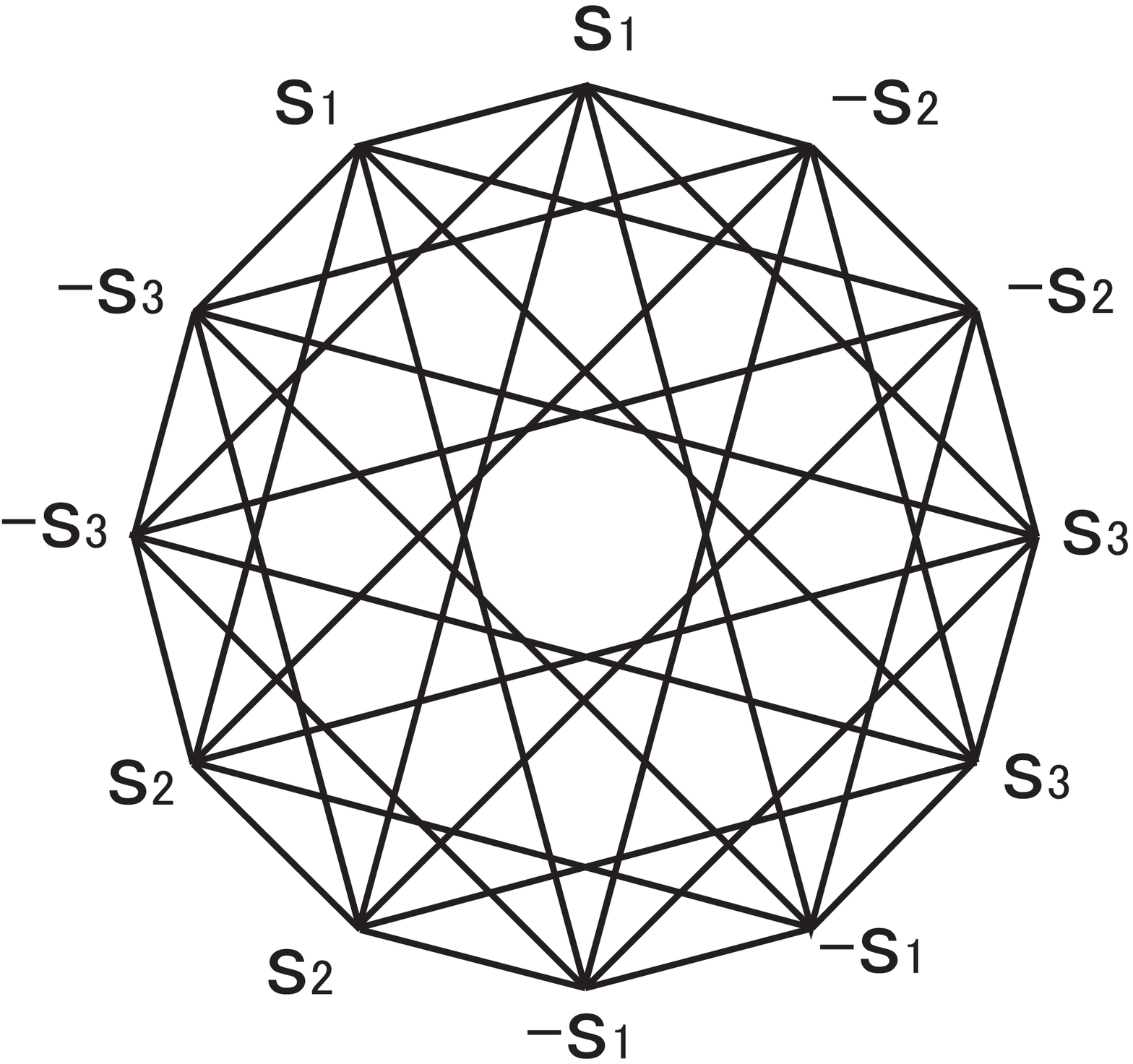}\caption{$\tau_1$}\label{figtau}
\end{minipage}\ \ \ \ \ \ 
\begin{minipage}{0.4\hsize}
\includegraphics[width=55.5mm]{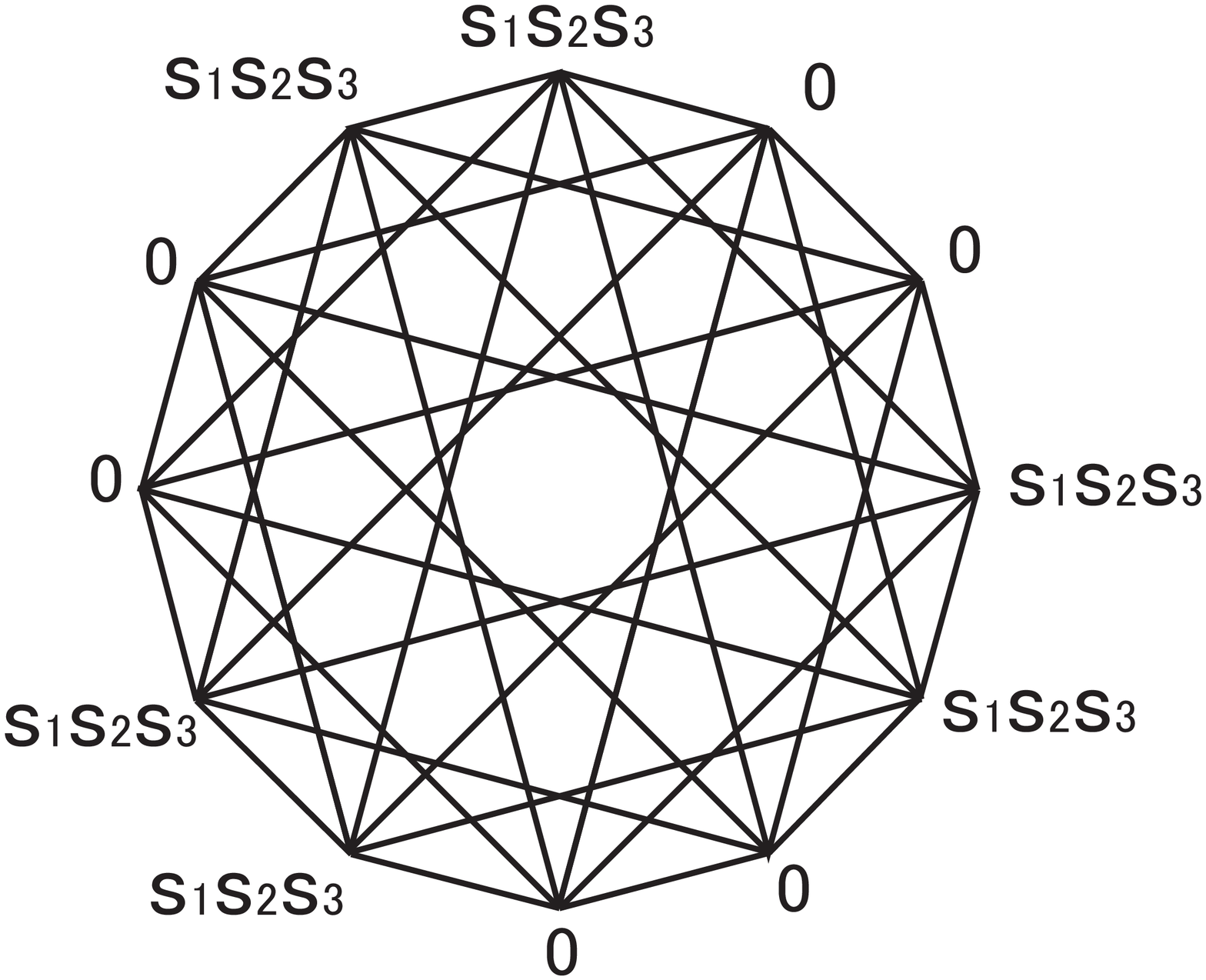}\caption{$f$}\label{figf}
\end{minipage}
\end{figure}

\begin{rema}
The restriction of $\tau_i$  to the subgraph $\mG_{+}$ (resp. $\mG_{-}$)  is $\tau_i$ (resp. $-\tau_i$)  in  $\mHT^*(\mA_3)$. 
In other words $\tau_i$ in $H^*_T(\mG_2)$ is a lift of the $\tau_i$ in $H^*_T(\mA_3)$.  
The element $f$ comes from some element in the graph cohomology ring of the labeled graph of the base space.  In fact, the labeled graph of the base space, denoted by $\mathcal{B}$, is described as  Figure~\ref{base}, and Figure~\ref{baseno} describes an element in  $H^*_T(\mathcal{B}) \subset \Map(V(\mathcal{B}), \Z[\t_1,\t_2,\t_3])$. The element $f$ in $H^*_T(\mG_2)$ is the pullback of the element in Figure~\ref{baseno} by the projection $\mG_2\to \mathcal{B}$. 
\begin{figure}[h]
\begin{minipage}{0.4\hsize}
\includegraphics[width=50mm]{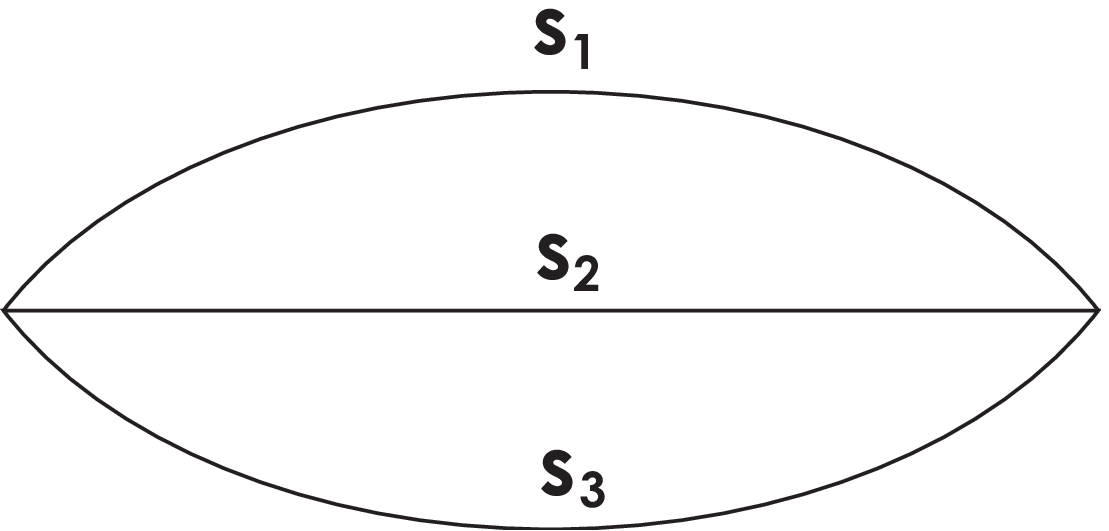}\caption{}\label{base}
\end{minipage} \quad
\begin{minipage}{0.4\hsize}
\includegraphics[width=50mm]{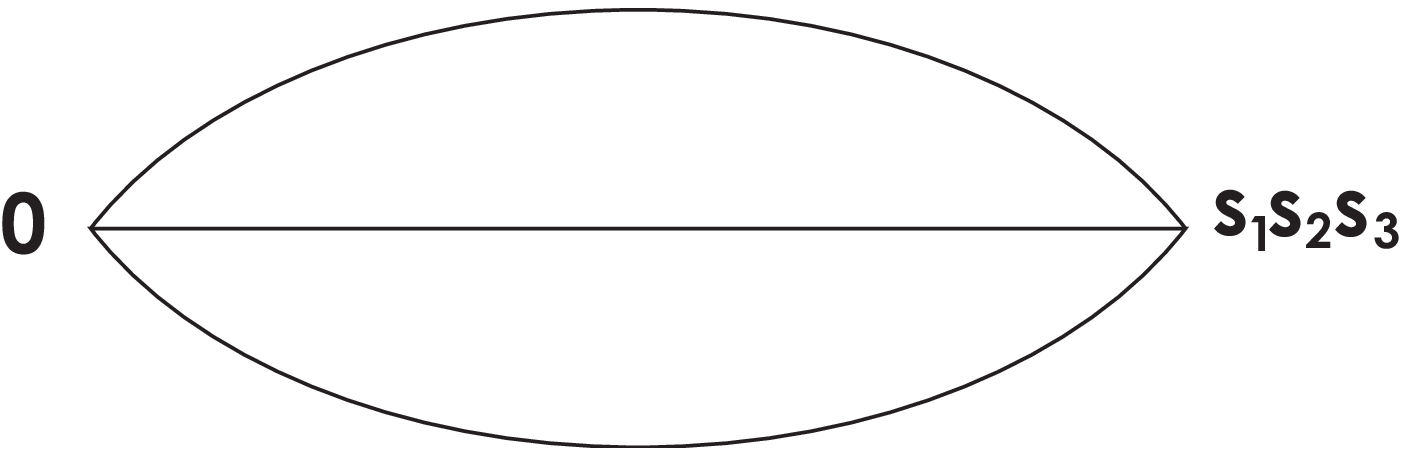}\caption{}\label{baseno}
\end{minipage}
\end{figure}

Guillemin, Sabatini and Zara \cite{gu-sa-za} construct module generators of the graph cohomology of the total space over the graph cohomology of the base space, by using module generators of the graph cohomology of fiber. (They consider the graph cohomology with $\R$ coefficient.)   
\end{rema}
\begin{theo}\label{maintheo}
Let $\mG_2$ be the labeled graph associated with the root system $\Phi(G_2)$ of type $G_2$. Then 
\begin{equation*}
H^*_T(\mG_2) = \Z[\tau_1,\tau_2,\tau_3,\t_1,\t_2,\t_3,f]/I,
\end{equation*}
where $I=<e_1(\tau), e_2(\tau)-e_2(\s), 2f-e_3(\tau)-e_3(\s), f^2-fe_3(\s) >$, and $e_i(\tau)$ $($resp. $e_i(s)$ $)$ is the $i^{th}$ elementary symmetric polynomial in $\tau_1,\tau_2,\tau_3$ 
$($resp. $s_1,s_2,s_3$ $)$.
\end{theo}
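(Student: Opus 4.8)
The plan is to prove that the evaluation map
\[
\varphi\colon R:=\Z[\tau_1,\tau_2,\tau_3,\t_1,\t_2,\t_3,f]/I\longrightarrow H^*_T(\mG_2),
\]
sending each generator to the element of $H^*_T(\mG_2)$ of the same name, is a well-defined isomorphism of $B$-algebras, where $B:=\Z[\t_1,\t_2,\t_3]$. That $\varphi$ is well defined amounts to checking that the four generators of $I$ evaluate to $0$, a routine vertexwise computation using $\tau_i(\w)=\varepsilon\s_{v(i)}$ together with $f|_{V^+}=e_3(\s)$ and $f|_{V^-}=0$: the class $e_1(\tau)$ vanishes because $\s_1+\s_2+\s_3=0$, $e_2(\tau)$ equals $e_2(\s)$ since $\varepsilon^2=1$ and $e_2$ is symmetric, and the two $f$-relations are verified on $V^+$ and $V^-$ separately using $e_3(\tau)|_{V^\pm}=\pm e_3(\s)$.

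Next I would pin down the module structure of the source. Over $B$ the relation $e_1(\tau)=0$ eliminates $\tau_3=-\tau_1-\tau_2$, and, using that $\Z[\tau_1,\tau_2,\tau_3]$ is free of rank $6$ over its subring of symmetric polynomials, the relation $e_2(\tau)-e_2(\s)$ shows that $B[\tau_1,\tau_2,\tau_3]/(e_1(\tau),e_2(\tau)-e_2(\s))$ is free over $B[e_3(\tau)]$ with basis $\{\tau_1^a\tau_2^b:0\le a\le 2,\ 0\le b\le 1\}$. The relation $2f-e_3(\tau)-e_3(\s)$ then trades the polynomial variable $e_3(\tau)$ for $f$ (crucially one cannot solve for $f$ over $\Z$, so $f$ genuinely survives), and $f^2-fe_3(\s)$ makes $B[f]/(f^2-fe_3(\s))$ free over $B$ of rank $2$ with basis $\{1,f\}$. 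Assembling these, $R$ is a free $B$-module of rank $12$ with basis $\{\tau_1^a\tau_2^b f^c:0\le a\le 2,\ 0\le b\le 1,\ c\in\{0,1\}\}$.

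The heart of the argument is to exploit the decomposition $V(\mG_2)=V^+\amalg V^-$, so that $h\mapsto(h|_{\mG_+},h|_{\mG_-})$ embeds $H^*_T(\mG_2)$ into $H^*_T(\mG_+)\oplus H^*_T(\mG_-)$. By the type $A_2$ computation applied to $\mG_\pm\cong\mA_3$ (with the $\s_i$ in the role of the $\t_i$; the same proof applies since $\s_1-\s_2,\s_2-\s_3,\s_3-\s_1$ are pairwise coprime in $B$), each summand is free over $B$ of rank $6$ on the restricted monomials $\tau_1^a\tau_2^b$. Restricting the basis of $R$ gives, at $V^+$, $\tau_1^a\tau_2^b f^c\mapsto(\tau_1^a\tau_2^b)|_{V^+}\,e_3(\s)^c$ and, at $V^-$, the value $(-1)^{a+b}(\tau_1^a\tau_2^b)|_{V^-}$ if $c=0$ and $0$ if $c=1$. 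Injectivity of $\varphi$ is then immediate: reading a kernel element on $V^-$ kills all coefficients with $c=0$, after which reading it on $V^+$ and cancelling the nonzerodivisor $e_3(\s)$ kills the coefficients with $c=1$.

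Surjectivity is the step I expect to be the main obstacle, and it is where $f$ earns its place. Given $h\in H^*_T(\mG_2)$, the $A_2$ result lets me write $h|_{\mG_-}$ in the restricted basis and subtract the matching element of $\varphi(R)$ with $c=0$, reducing to the case $h|_{V^-}=0$. The edges between $V^+$ and $V^-$ are precisely those labelled by a single $\s_k$ (Case~3 of Lemma~\ref{lem1}), and the three of them leaving $(v,+)$ carry the three labels $\s_{v(1)},\s_{v(2)},\s_{v(3)}$; since $h$ vanishes on $V^-$, the GKM condition forces $h(v,+)$ to be divisible by each $\s_{v(k)}$, hence by $\s_1\s_2\s_3=e_3(\s)$, as these linear forms are pairwise coprime. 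Dividing $h|_{V^+}$ by $e_3(\s)$ (which is coprime to every within-$V^+$ label $\s_i-\s_j$) yields a genuine class on $\mG_+$, expressible by the $A_2$ result in the monomials $\tau_1^a\tau_2^b$; multiplying the corresponding combination by $f$---the class equal to $e_3(\s)$ on $V^+$ and to $0$ on $V^-$---exhibits $h$ as $\varphi$ of an element with $c=1$. Thus $\varphi$ is onto, completing the proof that it is an isomorphism. The conceptual crux is exactly this gluing: the sole obstruction to assembling two type $A_2$ classes across the $\s_k$-labelled edges is divisibility by $e_3(\s)$, and the extra generator $f$ is precisely the class that realizes it.
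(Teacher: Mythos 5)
Your argument is correct, but it reaches Theorem~\ref{maintheo} by a route that differs from the paper's in two substantive ways. The paper never proves injectivity of the evaluation map $\Z[\tau,\t,f]/I\to H^*_T(\mG_2)$ directly: it only shows that the twelve monomials $\tau_1^{i_1}\tau_2^{i_2}f^{j}$ generate the quotient as a $\Z[\t]$-module (Lemma~\ref{gene}), separately computes the Hilbert series of $H^*_T(\mG_2)$ by the same vertex-by-vertex elimination used for surjectivity (Lemma~\ref{inje}), and then extracts injectivity and freeness simultaneously from the coincidence of the two series. You instead establish freeness of the quotient over $B=\Z[\t_1,\t_2,\t_3]$ directly, via the standard basis of $\Z[\tau_1,\tau_2,\tau_3]$ over its symmetric polynomials followed by the elimination of $e_3(\tau)$ in favour of $f$ (correctly noting that one can solve for $e_3(\tau)$ but not for $f$ over $\Z$), and you prove injectivity by restricting to $V^+\amalg V^-$ and invoking linear independence of the restricted monomials, i.e.\ the type $A_2$ freeness statement for the two $\mA_3$-type subgraphs with $\s_i$ in place of $\t_i$. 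Your surjectivity argument is essentially the paper's Lemma~\ref{surj}: reduce to $h|_{V^-}=0$, use the fact that the three mixed edges at $(v,+)$ carry the labels $\s_{v(1)},\s_{v(2)},\s_{v(3)}$ to get divisibility by $\s_1\s_2\s_3$, divide, and absorb the quotient class into a multiple of $f$. What your route buys is the elimination of the Hilbert-series computation --- the longest calculation in the paper --- together with the sharper intermediate statement that $\Z[\tau,\t,f]/I$ is free of rank $12$ over $B$ with an explicit basis; the cost is that you lean on the $A_2$ computation for the $\s$-labelled subgraphs, which are the images of $\mA_3$ under the specialization $\t_i\mapsto\s_i$ (note $\s_1+\s_2+\s_3=0$) rather than literally the same labelled graph. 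That transfer is legitimate --- the $\s_i-\s_j$ remain pairwise non-associate primitive linear forms, so the divisibility arguments go through, and the relevant $6\times 6$ determinant specializes to $\pm\prod_{i<j}(\s_i-\s_j)^3\neq 0$ in $B$, giving the linear independence you need --- but it deserves an explicit sentence in a final write-up. The paper's route is self-contained and yields the Poincar\'e series of $H^*_T(\mG_2)$ as a by-product.
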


The rest of the paper is devoted to the proof of Theorem~\ref{maintheo}.
\begin{lemm}\label{surj}
$H^*_T(\mG_2)$ is generated by $\tau_1, \tau_2, \tau_3$, $\t_1, \t_2, \t_3$ and $f$ as a ring.
\end{lemm}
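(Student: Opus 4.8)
The plan is to peel off the two copies of $\mA_3$ sitting inside $\mG_2$ one at a time, using the known generation of $H^*_T(\mA_3)$ by the classes $\tau_i$ and $\t_i$. Take an arbitrary $h \in H^*_T(\mG_2)$. First I would restrict $h$ to the full subgraph $\mG_{+}$. Since $\mG_{+}\cong\mA_3$ as a labeled graph and the coefficient ring $\Z[\t_1,\t_2,\t_3]$ is generated by the constant classes $\t_i$, the $A_2$ computation shows that $H^*_T(\mG_{+})$ is generated by the restrictions $\tau_i|_{\mG_{+}}$ and $\t_i|_{\mG_{+}}$. Hence there is a polynomial $P$ in the generators $\tau_1,\tau_2,\tau_3,\t_1,\t_2,\t_3$ with $P|_{\mG_{+}}=h|_{\mG_{+}}$; here I use the observation from the remark that $\tau_i|_{\mG_{+}}$ is exactly the class $\tau_i$ of $\mA_3$. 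Replacing $h$ by $h':=h-P$, I may assume that $h'$ vanishes on $\mG_{+}$ and is supported on $\mG_{-}$.

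Next I would exploit the cross edges (Case 3 of Lemma~\ref{lem1}). Each vertex $(v,-)$ of $\mG_{-}$ is joined by a Case-3 edge to a vertex of $\mG_{+}$ for every pair $\{i,j\}$, and the three resulting labels are exactly $\s_{v(1)},\s_{v(2)},\s_{v(3)}$, i.e.\ $s_1,s_2,s_3$ in some order. Since $h'$ vanishes on $\mG_{+}$, the GKM condition across each such edge forces $h'(v,-)$ to be divisible by each of $s_1,s_2,s_3$; as these are pairwise coprime primitive linear forms in the UFD $\Z[\t_1,\t_2,\t_3]$, it follows that $h'(v,-)$ is divisible by $e_3(\s)=s_1s_2s_3$. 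Writing $h'(v,-)=e_3(\s)\,q(v)$ and using that $e_3(\s)$ is coprime to every long-root label $\s_{v(i)}-\s_{v(j)}$, the GKM conditions internal to $\mG_{-}$ descend to the $q(v)$, so $q$ defines a class in $H^*_T(\mG_{-})\cong H^*_T(\mA_3)$.

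I would then produce the key block-supported class. The relation built into $I$ gives $e_3(\s)-f$, which equals $e_3(\s)=s_1s_2s_3$ on $\mG_{-}$ and $0$ on $\mG_{+}$; this is the integral substitute for $\tfrac12\bigl(e_3(\s)-e_3(\tau)\bigr)$ and is precisely the reason the extra generator $f$ is unavoidable over $\Z$. Expressing $q$ as a polynomial $R(\tau_i|_{\mG_{-}},\t_i)$ via the $A_2$ result and lifting it to $Q:=R(\tau_i,\t_i)\in H^*_T(\mG_2)$, I claim $h'=(e_3(\s)-f)\,Q$: both sides vanish on $\mG_{+}$ (since $e_3(\s)-f$ does), and on $\mG_{-}$ both equal $e_3(\s)\,q$. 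Since $V(\mG_2)=V^{+}\amalg V^{-}$, equality of the two vertex-wise functions gives $h=P+(e_3(\s)-f)\,Q$, a polynomial in $\tau_1,\tau_2,\tau_3,\t_1,\t_2,\t_3,f$, as desired.

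The main obstacle I anticipate is the bookkeeping in the first step: verifying that $H^*_T(\mG_{+})$ is genuinely generated by $\tau_i|_{\mG_{+}}$ and the $\t_i$, even though the labels of $\mG_{+}$ span only the rank-two sublattice $\langle \s_i-\s_j\rangle$ while the coefficients live in the rank-three ring $\Z[\t_1,\t_2,\t_3]$. The point is that the extra (diagonal) direction is carried by the constant classes $\t_i$, so the $A_2$ computation applies after adjoining them; making this precise is the one place where a little care is needed, whereas the divisibility and coprimality arguments in the remaining steps are routine given that $\Z[\t_1,\t_2,\t_3]$ is a UFD in which distinct roots are coprime.
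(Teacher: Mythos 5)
Your proof is correct and follows essentially the same strategy as the paper's, just run in mirror image: the paper first kills $h$ on $V^-$ by an explicit inductive subtraction of polynomials in the $\tau_i$'s and $\t_i$'s (its Claim), then uses the cross-edge divisibility by $\s_1\s_2\s_3$ together with the class $f$ supported on $V^+$, whereas you kill $h$ on $V^+$ first and use $e_3(\s)-f$ supported on $V^-$. The only substantive difference is that you cite the type-$A_2$ generation result for the two blocks where the paper reproves it by an inline induction (which it reuses for the Hilbert-series count in the next lemma); the caveat you raise about the labels spanning only the rank-two lattice generated by the $\s_i-\s_j$ is resolved exactly as you suggest, since that inductive argument only uses that the relevant labels are pairwise coprime primitive linear forms in $\Z[\t_1,\t_2,\t_3]$.
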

\begin{proof}
The idea of the proof of the lemma is same as that of Lemma 3.2 in \cite{fu-i-ma}. 
\begin{claim}
For any homogeneous element $h$ of $H^*_T(\mG_2)$, there is a polynomial $G$ in $\tau_i$'s and $\t_i$'s 
such that the restrictions of $h$ and $G$ to the subgraph $\mG_{-}$ coincide. 
\end{claim}

\medskip
\noindent
Proof of Claim. 
We set $$V_{i}^{-} :=\{ (v,\varepsilon ) \in S_3 \times \{ \pm \} \mid v(i)=3 , \varepsilon =- \} \quad \text{for} \quad i\in [3].$$ 
Let $2k$ be the degree of $h \in H^*_T(\mG_2)$ and 
\begin{equation*}
1 \leq q \leq \min \{ k+1, 3 \}  
\end{equation*} 
and assume that 
\begin{equation*}
h(\w_i )=0 \ \ \ \text{for any } \w_i \in V_{i}^{-} \ \text{whenever} \ \ i < q.
\end{equation*}
For any $\w =(v, -) \in V_{q}^{-}$, there is a unique vertex $\w_i =(v_i ,-)\in V_{i}^{-}$ for each $1 \leq i < q$ such that $\w$ and $\w_i$ are connected by an edge labeled by $\s_{\w(i)}-\s_{\w(q)}=\s_{\w(i)}+\s_3$. (Namely $v=v_i(i,q)$.)
Then $h(\w)-h(\w_i)=h(\w)$ is divisible by $\s_{\w(i)}+\s_3$ for $i < q$, so there is a homogeneous polynomial $g_{\w} \in \Z[\t_1,\t_2,\t_3]$ of degree $2(k-q+1)$ such that 
\begin{equation}\label{inj1}
h(\w )=g_{\w}\prod_{i=1}^{q-1}(\s_{\w (i)}+\s_3 ).
\end{equation} 
On the other hand, since $(\tau_k+\s_3)(\u)=\s_{\u(k)}+\s_3$ for any $\u \in V(\mG_2)$, 
\begin{equation}\label{lem1.1}
\prod_{i=1}^{q-1}(\tau_i+\s_3)(\u)=
  \prod_{i=1}^{q-1}(\s_{\u (i)}+\s_3 ).   
\end{equation}
In particular, when $\u$ is in $ V_{i}^{-}$ for $i <q$, \eqref{lem1.1} is equal to $0$ since $\s_{u(i)}=-s_3$.  
So, it follows from \eqref{inj1} and \eqref{lem1.1} that 
\begin{equation}\label{inj2}
\left( h-g_{\w} \prod_{i=1}^{q-1}(\tau_i+\s_3)  \right) (\u)=0 ,
\end{equation}
whenever $\u \in V_{i}^{-} $ for $i<q $ or $\u=\w$.
We set 
\begin{equation}\label{inj3}
h':= h-g_{\w} \prod_{i=1}^{q-1}(\tau_i+\s_3) .
\end{equation} 
Note that 
\begin{equation}\label{lem1.3}
h'(u)=0\quad \text{if $u\in V_i^-$ for $i<q$ or $u=w$.}
\end{equation}

Let $\w'$ be the other vertex in $V_q^{-}$.  (Namely, $V_q^- = \{ \w, \w' \}$.) 
Then $\w$ and $\w'$ are connected by an edge labeled by $\s_1-\s_2$ and there is a unique vertex $\w_i'=(v_i',-) \in V_i^-$ for each $1 \leq i < q$ such that $\w'$ and $\w_i'$ are connected by an edge labeled by $s_{\w'(i)}-s_{\w'(q)}=s_{\w'(i)}+s_3$, so $h'(\w')-h'(\w)=h'(\w')$ is divisible by $\s_1-\s_2$ and $s_{\w'(i)}+s_3$.   
Therefore, there is a homogeneous polynomial $g_{\w'} \in \Z[\t_1,\t_2,\t_3]$ of degree $2(k-q)$ such that 
\begin{equation}\label{h'1}
h'(\w')=(\s_1-\s_2) g_{\w'} \prod_{i=1}^{q-1}(\s_{\w' (i)}+\s_3 ) .
\end{equation} 
On the other hand, for $j \in [3] \setminus \{ q \}$,
\begin{eqnarray}
\left( \tau_{j}-\s_{\w(j)} \right)(\w')&=&\s_{\w'(j)}-\s_{\w(j)}=\delta (\s_1-\s_2) \label{h'2}\\
\left( \tau_{j}-\s_{\w(j)} \right)(\w)&=&\s_{\w(j)}-\s_{\w(j)}=0 \label{h'3},
\end{eqnarray}
where $\delta = \pm 1$ and $\delta$ depends on $\w$. 
Thus, it follows from \eqref{lem1.1}, \eqref{h'1}, \eqref{h'2} and \eqref{h'3} that 
\begin{equation*}
\left( h'-(\tau_j -\s_{\w(j)})(\delta g_{\w'}) \prod_{k=1}^{q-1}(\tau_k+\s_3)  \right) (\u)=0.  
\end{equation*}
for $\u \in V_{i}^{-}$ whenever $i \leq q$.
Therefore, putting $H=g_{\w}+(\tau_j -\s_{\w(j)})(\delta g_{\w'}) $, and subtracting the polynomial 
$  H \Pi _{k=1}^{q-1}(\tau_j+\s_3) $ 
from $h$, we may assume that
$$h(\u)=0 \ \ \ \text{for any } \ \ \u \in V_{i}^{-} \ \ \ \text{whenever} \ \ \ i < q+1.$$

The above argument implies that  $h$ finally takes zero on all vertices in $V^-$ by subtracting a polynomial in $\tau_i$'s and $\t_i$'s, and this completes the proof of the claim.
\\ \ 

The claim allows us to assume that our homogeneous element $h$ in $H^*_T(\mG_2)$ satisfies $h(\u)=0$ for all $\u \in V^-$.
Any $\w =(v, +) \in V^+$ has a unique edge which connects $\w$ and some $\w_i=(v_i, -) \in V_{i}^{-}$ for each $i \in [3]$, and the edge has a label $\s_{\w(k)}$ where $k$ is determined by $v(k)=v_i(k)$. 
Namely, $h(\w)-h(\w_i)=h(\w)$ is divisible by $\s_{\w(k)}$, thus, there is a homogeneous polynomial $p_{\w}$ of  $\Z[\t_1, \t_2, \t_3]$ such that
$$
h(\w)=\s_1 \s_2 \s_3 p_{\w}.
$$ 
In addition the collection of polynomials $\{p_{\w}\}$ satisfies the GKM condition in $\mG_+$. 
In fact, for any vertices $\w$ and $\w'$ in $V^+$ connected by an edge with label $s_i-s_j$ for some $i,j \in [3]$, it follows from the definition of  $H^*_T(\mG_2)$ that $h(\w)-h(\w')=\s_1 \s_2 \s_3(p_{\w}-p_{\w'})$ is divisible by $s_i-\s_j$. So $p_{\w}-p_{\w'}$ is divisible by the label $\s_i-\s_j$. 
 Then the same argument as in the proof of the claim above shows that there is a polynomial $H'$ in $\tau_i$'s and $\t_i$'s such that $H'(\w)=p_{\w}$ for $\w \in V^+$.  
Therefore, 
\begin{equation*}
(h-H'f)(\w)=0 \ \ \ \text{for any} \ \ \ \w \in V(\mG_2).
\end{equation*}
This completes the proof of the lemma.
\end{proof}

Remember that the Hilbert series of a graded ring  $A^*=\oplus_{j=0}^\infty A^j$, where $A^j$ is the degree $j$ part of $A^*$ and of finite rank  over $\ \mathbb{Z}$, is a formal power series defined by 
\[
F(A^*,x):= \sum_{j=0}^\infty(\rank_{\mathbb{Z}} A^j)x^j.\]

\begin{lemm}\label{inje}
$$F(H^*_T(\mG_2),x)= \frac{1}{(1-x^2)^3}(1+x^2)(1+x^2+x^4)(1+x^6)$$
\end{lemm}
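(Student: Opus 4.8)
The plan is to avoid computing $F(H^*_T(\mG_2),x)$ vertex by vertex and instead to split $H^*_T(\mG_2)$ into an ``$\mG_{-}$ part'' and an ``$\mG_{+}$ part'' along the two halves of the proof of Lemma~\ref{surj}. Write $R=\Z[t_1,t_2,t_3]$ and let
\[
r\colon H^*_T(\mG_2)\longrightarrow H^*_T(\mG_{-})
\]
be the degree-preserving $R$-module homomorphism restricting a class to its values on $V^{-}$. Since $\mG_{-}\cong\mA_3$ as labeled graphs (the Remark following the redescription of $\mG_2$), we have $H^*_T(\mG_{-})\cong H^*_T(\mA_3)$. The goal is to fit $r$ into a short exact sequence of graded $R$-modules
\[
0\longrightarrow \ker r\longrightarrow H^*_T(\mG_2)\xrightarrow{\ r\ } H^*_T(\mA_3)\longrightarrow 0,
\]
with $\ker r\cong H^*_T(\mA_3)$ but with all degrees raised by $\deg f=6$. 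Granting this, ranks are additive degree by degree, so $F(H^*_T(\mG_2),x)=(1+x^{6})\,F(H^*_T(\mA_3),x)$; substituting the type-$A_2$ value $F(H^*_T(\mA_3),x)=\frac{1}{(1-x^2)^3}(1+x^2)(1+x^2+x^4)$, which follows from the complete-intersection presentation of $H^*_T(\mA_3)$ recalled in the $A_2$ Example, then gives exactly the asserted formula. A pleasant feature is that additivity of ranks in a short exact sequence of finitely generated abelian groups is automatic, so no separate freeness argument over $R$ is required.

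First I would check that $r$ is surjective. The Claim inside the proof of Lemma~\ref{surj} shows that every homogeneous class of $H^*_T(\mG_2)$ agrees on $V^{-}$ with a polynomial in the $\tau_i$ and $t_i$; since $\tau_i$ and $t_i$ restrict on $\mG_{-}$ to a generating set of $H^*_T(\mA_3)$ (the $\tau_i$ restricting to $-\tau_i$ by the Remark, together with the Example's generation statement), the image of $r$ is all of $H^*_T(\mG_{-})$.

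Next I would identify $\ker r$. If $r(h)=0$ then $h$ vanishes on $V^{-}$, and the second half of the proof of Lemma~\ref{surj} shows that on $V^{+}$ one has $h(\w)=s_1s_2s_3\,p_{\w}=f(\w)\,p_{\w}$, where the collection $\{p_{\w}\}_{\w\in V^{+}}$ satisfies the GKM condition on $\mG_{+}$, i.e.\ defines a class $g\in H^*_T(\mG_{+})$. Conversely, given $g\in H^*_T(\mG_{+})$ I would define $h$ by $h|_{V^{-}}=0$ and $h(\w)=f(\w)g(\w)$ for $\w\in V^{+}$, and verify that $h\in H^*_T(\mG_2)$: edges inside $V^{+}$ and inside $V^{-}$ are immediate, while for a cross edge $e_{\w,\u}$ with $\w\in V^{+}$, $\u\in V^{-}$ the label is some $s_k$, which divides $f(\w)=s_1s_2s_3$ and hence divides $h(\w)-h(\u)=f(\w)g(\w)$. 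Because $s_1s_2s_3\neq0$ in the integral domain $R$, this assignment is an injection onto $\ker r$, giving an isomorphism $H^*_T(\mG_{+})\xrightarrow{\sim}\ker r$ of graded $R$-modules raising degrees by $6$; combined with $\mG_{+}\cong\mA_3$ this yields $\ker r\cong H^*_T(\mA_3)$ with the degree-$6$ shift and completes the sequence.

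The main obstacle is precisely this identification of $\ker r$ with the image of multiplication by $f$. One must confirm that every kernel class is exactly divisible by $f$ with quotient a bona fide GKM class on $\mG_{+}$, and that the correspondence is a degree-preserving $R$-module isomorphism; the only delicate point is divisibility across the Case~3 edges of Lemma~\ref{lem1} joining $V^{+}$ to $V^{-}$, which works only because each such label $s_k$ is one of the three factors of $f=s_1s_2s_3$ from \eqref{fdef}. All of this is a repackaging of the divisibility bookkeeping already carried out at the end of the proof of Lemma~\ref{surj}, so once it is recorded as a module isomorphism the additivity of Hilbert series over the sequence and the known type-$A_2$ series finish the proof.
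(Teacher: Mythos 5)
Your architecture is sound and, at bottom, is a repackaging of the paper's own argument: the paper computes the Hilbert series via the filtration $G^-_1\supset G^-_2\supset G^-_3\supset G^+_1\supset G^+_2\supset G^+_3\supset 0$ and a rank recursion for the successive quotients, and your short exact sequence $0\to \ker r\to H^*_T(\mG_2)\to \mathrm{im}(r)\to 0$ is exactly the coarsening of that filtration at the step $G^+_1$, with $\ker r = G^+_1$. Your identification of $\ker r$ with $f\cdot H^*_T(\mG_{+})$ (degree shift $6$) is correct and carefully justified: every cross edge at a vertex of $V^+$ carries one of the labels $\s_1,\s_2,\s_3$, these are pairwise non-associate primes in $\Z[\t_1,\t_2,\t_3]$, so a class vanishing on $V^-$ is divisible by $\s_1\s_2\s_3$ at each vertex of $V^+$, and the quotient satisfies the GKM condition on $\mG_{+}$.

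The genuine gap is the step where you replace $\mathrm{im}(r)$ and $H^*_T(\mG_{+})$ by $H^*_T(\mA_3)$ and quote its Hilbert series. The labels of the internal edges of $\mG_{\pm}$ are the long roots $\s_i-\s_j$ (e.g.\ $\s_1-\s_2=\t_1-2\t_2+\t_3$), not the forms $\t_i-\t_j$; inside $H^2(BT)=\Z^3$ the lattice they span has index $3$ in the lattice spanned by the $\t_i-\t_j$, so there is no automorphism of $H^2(BT)$ matching the two label sets, and $H^*_T(\mG_{\pm})$ and $H^*_T(\mA_3)$ are genuinely different subrings of $\bigoplus_{S_3}\Z[\t_1,\t_2,\t_3]$. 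The Remark's assertion that $\mG_{\pm}\cong\mA_3$ ``as labeled graphs'' is only an identification after renaming $\s_i$ as $\t_i$, which is not a map of the ambient ring, so neither the presentation of $H^*_T(\mA_3)$ nor its Hilbert series can simply be transported. The two rings do have the same Hilbert series, but that is a fact to be proved: one must rerun the type-$A_2$ generation and rank count with the labels $\s_i-\s_j$ (it goes through verbatim because these are still pairwise non-proportional primitive linear forms, hence non-associate primes), and must also check that the restriction map $r$ is surjective onto $H^*_T(\mG_{-})$ rather than onto a proper subring --- this follows because the proof of the Claim in Lemma~\ref{surj} uses only the edges internal to $\mG_{-}$, hence applies to an arbitrary element of $H^*_T(\mG_{-})$. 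Carrying out these two checks is precisely the content of the Claim and of the $G^\pm_q$ rank recursion in the paper's proof, so once the gap is filled your argument and the paper's coincide.
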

\begin{proof}
We set $d(k):=\rank H^{2k}_T(\mG_2)$ and $r(k):=\rank \Z[\t_1, \t_2, \t_3]^{2k}$, where $\Z[\t_1, \t_2, \t_3]^{2k}$ is the set of homogeneous polynomials of degree $2k$. We note that the degree of $\t_i$ is $2$. 
For $h \in H^{2k}_T(\mG_2)$, assume that there is some $q \in [3]$ such that $$h(\w)=0 \ \ \text{for} \ \  \w \in V^-_i \ \ \text{whenever} \ \  i < q.$$
Then, the proof of the claim in Lemma~\ref{surj} shows that there is a polynomial $H$ in $\tau_i$'s and $\t_i$'s such that $$(h-H)(\w)=0 \quad \text{for} \quad \w \in V^-_i \quad \text{whenever} \ \ i \leq q,$$
and that the polynomial $H$ is of the form 
 
$$H= \left(g_\w+\delta(\tau_j-s_{w(j)})g_{\w'} \right)\prod^{q-1}_{k=1}(\tau_k+s_3)$$ 
where $g_\w$ and $g_{\w'}$ are some polynomials in $\t_1, \t_2, \t_3$, and the degree of $g_\w$ (resp. $g_{\w'}$) is $2(k-q+1)$ (resp. $2(k-q)$).  
Therefore the rank of the additive group consisting of all such polynomials $H$ is given by $$r(k-q+1)+ r(k-q).$$

Similarly, assume that there is some $q \in [3]$ such that $$h(\w)=0 \ \ \text{for} \ \  \w \in V^- \ \ \text{or} \ \ \w \in V^+_i \ \ \text{whenever} \ \  i < q,$$
where $V^+_i=\{ (v, +) \  \mid  \ v(i)=3 \}.$
Then, the proof of Lemma~\ref{surj} shows that there is a polynomial $H'$ in $\tau_i$'s, $\t_i$'s and $f$ such that $$(h-H')(\w)=0 \quad \text{for} \quad \w \in V^- \quad \text{or} \quad \w \in V^+_i \quad \text{whenever} \quad i \leq q,$$
and that the polynomial $H'$ is of the form  
$$H'=s_1 s_2 s_3  \left(g'_\w+\delta(\tau_j-s_{w(j)})g'_{\w'}\right)\prod^{q-1}_{k=1}(\tau_k+s_3)$$ where $w$ and $w'$ are the two vertices of $V^+_q$ and $g'_\w$ and $g'_{\w'}$ are some polynomials in $\t_1, \t_2, \t_3$, and the degree of $g'_\w$ (resp. $g'_{\w'}$) is $2(k-2-q)$ (resp. $2(k-3-q)$).
Thus, the rank of the the additive group consisting of all such polynomials $H'$ is given by $$r(k-2-q)+ r(k-3-q).$$

Let $G^{+}_q$ (resp. $G^{-}_q$) be the the additive group consisting of all polinomials $h$ of degree $2k$ such that 
$$h(\w)=0 \ \ \text{for} \ \  \w \in V^- \ \ \text{or} \ \ \w \in V^+_i \ \ \text{whenever} \ \  i < q$$
$$\text{(resp. } h(\w)=0 \ \ \text{for} \ \  \w \in V^-_i\text{whenever} \ \  i < q \text{\  )}.$$
Then, the rank of $G^{+}_3$ is $r(k-5)+r(k-6)$ by the above argument. 
The rank of $G^{+}_2$ is equal to $\rank G^{+}_3$ plus the rank of additive group consisting of all polynomial $H'$ such that 
$$(h-H')(\w)=0 \quad \text{for} \quad \w \in V^- \quad \text{or} \quad \w \in V^+_i \quad \text{whenever} \quad i \leq 2,$$
so the rank of $G^{+}_2$ is $\rank G^+_3+r(k-4)+r(k-5)$.
Similarly, the rank of $G^{+}_1$ is equal to  $\rank G^{+}_2$ plus $r(k-3)+r(k-4)$, namely 
\begin{equation}\label{lempr4.7-1}
\rank G^+_1=\sum_{i=4}^{6}(r(i)+r(i-1)).
\end{equation} 
In the same way, we have 
\begin{equation}\label{lempr4.7-2}
\rank G^-_3= \rank G^+_1 + r(k-2)+r(k-3),
\end{equation}
and 
\begin{equation}\label{lempr4.7-3}
\rank G^-_{q} = \rank G^-_{q+1} + r(k-q+1)+ r(k-q)  \quad \text{for } q=1,2 .
\end{equation}
Therefore, it follows from \eqref{lempr4.7-1}, \eqref{lempr4.7-2} and \eqref{lempr4.7-1} that 
$$d(k)=\rank G^-_1 = \sum_{i=k}^{k-5} \Big( r(i)+r(i-1) \Big)= r(k)+2\sum_{i=k-5}^{k-1}r(i) + r(k-6)$$ where $r(i)=0$ for $i<0$.
Namely, 
\begin{equation*}
d(k)=
\left\{
\begin{array}{ll}
  r(0) & \text{for} \ \ \ k=0 \\
  2\sum _{i=0}^{k-1}r(i) + r(k) & \text{for} \ \ \  1 \leq k \leq 5\\
  r(k-6) + 2\sum _{i=k-5}^{k-1}r(i) + r(k) & \text{for} \ \ \  k > 5 . \\
\end{array}
\right. 
\end{equation*}
Therefore, 
\begin{eqnarray*}
F(H^*_T(\mG_2 ),x) &=& \sum_{k=0}^{\infty} d(k) x^{2k}\\
                   &=& r(0) + \sum_{k=1}^{5}\left( 2\sum _{i=0}^{k-1}r(i) + r(k) \right) x^{2k} \\
                   & \  & \quad + \sum_{k=6}^{\infty} \left( r(k-6) + 2\sum _{i=k-5}^{k-1}r(i) + r(k) \right) x^{2k}\\
                   &=& \sum_{k=0}^{\infty}r(k)x^{2k} + \sum_{k=6}^{\infty}r(k-6)x^{2k} \\ 
                   & \  & \quad+ 2\left( \sum_{k=1}^{5} \sum_{i=0}^{k-1}r(i)x^{2k}+\sum_{k=6}^{\infty} \sum_{i=k-5}^{k-1}r(i)x^{2k} \right) \\
                   &=& (1+ x^{12})(r(0)+r(1)x^2+r(2)x^4+ \cdots) \\
                   & \  & \  + 2(x^{2}+x^4+x^6+x^8+x^{10})(r(0)+r(1)x^2+r(2)x^4+ \cdots )\\
                   &=& (1+ 2x^{2}+2x^4+2x^6+2x^8+2x^{10}+x^{12})F(\Z[\t_1, \t_2, \t_3], x)\\
                   &=& (1+x^2)(1+x^2+x^4)(1+x^6)F(\Z[\t_1, \t_2, \t_3], x)\\
                   &=& \frac{1}{(1-x^2)^3}(1+x^2)(1+x^2+x^4)(1+x^6),
\end{eqnarray*}
 proving the lemma.
\end{proof}

We abbreviate the polynomial ring $\mathbb{Z}[\tau_1,\tau_2,\tau_3,\t_1,\t_2,\t_3,f]$ as $\Z[\tau,\t,f]$. The canonical map $\Z[\tau,\t,f] \to H^*_T(\mG_2)$ is a grade preserving homomorphism which is surjective by Lemma~\ref{surj}.  
It easily follows from \eqref{tau} and \eqref{fdef} that 
\begin{eqnarray}
e_1(\tau) &=&0, \label{1} \\ 
e_2(\tau)-e_2(\s) &=&0,  \label{2}\\
2f-e_3(\tau)-e_3(\s) &=&0, \label{3} \quad \quad \text{and} \\
f^2-fe_3(\s) &=&0. \label{4}
\end{eqnarray} 
Therefore the canonical map above induces a grade preserving epimorphism 
\begin{equation} \label{cano}
\Z[\tau,\t,f]/I \to H_T^*(\mG_2), 
\end{equation}
where $$I=<e_1(\tau), e_2(\tau)-e_2(\s), 2f-e_3(\tau)-e_3(\s), f^2-fe_3(\s)>.$$
We note that $\Z[\tau,\t,f]/I$ is a $\Z[t]$-module in a natural way. 
\begin{lemm}\label{gene}
$\Z[\tau,\t ,f]/I$ is generated by $\prod_{k=1}^3\tau_k^{i_k}f^j$ for $0 \leq i_k \leq 3-k$ and $j=0,1$ as a $\Z[\t]-$module.
\end{lemm}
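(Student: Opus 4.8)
The plan is to turn an arbitrary element of $\Z[\tau,\t,f]/I$ into a $\Z[\t]$-linear combination of the twelve monomials $\prod_{k=1}^3\tau_k^{i_k}f^{\,j}$ ($0\le i_k\le 3-k$, $j\in\{0,1\}$) by a two-stage reduction: first kill the high powers of the $\tau_i$, then those of $f$. The engine for the first stage is the classical structure theorem of Artin: $\Z[\tau_1,\tau_2,\tau_3]$ is a \emph{free} module over its subring of symmetric polynomials $\Z[e_1(\tau),e_2(\tau),e_3(\tau)]$, with the six staircase monomials $\prod_{k=1}^3\tau_k^{i_k}$ ($0\le i_k\le 3-k$) as a basis, and this holds over $\Z$. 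Extending scalars by $\Z[\t,f]$, the ring $\Z[\tau,\t,f]$ is free over $\Z[e_1(\tau),e_2(\tau),e_3(\tau),\t,f]$ on the same six monomials; thus every element can be written as $\sum_b c_b\, b$, where $b$ runs over those six monomials and each $c_b$ is a polynomial in $e_1(\tau),e_2(\tau),e_3(\tau)$ and the $\t_i,f$.

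Next I would pass to the quotient and use relations \eqref{1}, \eqref{2} and \eqref{3}, which say precisely that, modulo $I$,
\[
e_1(\tau)\equiv 0,\qquad e_2(\tau)\equiv e_2(\s),\qquad e_3(\tau)\equiv 2f-e_3(\s),
\]
so that $e_1(\tau),e_2(\tau),e_3(\tau)$ already lie in the image of $\Z[\t,f]$ (recall $e_2(\s),e_3(\s)\in\Z[\t]$). Substituting these into the coefficients $c_b$ above converts every $c_b$ into a polynomial in the $\t_i$ and $f$ alone. Hence, modulo $I$, every element of $\Z[\tau,\t,f]$ is a $\Z[\t,f]$-linear combination of the six staircase monomials; since $0\le i_k\le 3-k$ forces $i_3=0$, these are exactly the $\tau_1^{i_1}\tau_2^{i_2}$ with $i_1\le 2$ and $i_2\le 1$.

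It then remains only to cap the power of $f$. Relation \eqref{4} gives $f^2\equiv e_3(\s)f$ and hence, by an immediate induction, $f^{\,n}\equiv e_3(\s)^{\,n-1}f$ for every $n\ge 1$; therefore the image of $\Z[\t,f]$ in $\Z[\tau,\t,f]/I$ is generated as a $\Z[\t]$-module by $1$ and $f$. Combining this with the previous paragraph shows that every element of $\Z[\tau,\t,f]/I$ is a $\Z[\t]$-linear combination of the products $\prod_{k=1}^3\tau_k^{i_k}f^{\,j}$ with $0\le i_k\le 3-k$ and $j\in\{0,1\}$, which is the assertion.

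The only genuinely nontrivial input is the integral freeness in the first paragraph (over $\Z$, not merely $\Q$); once it is granted, the substitutions from \eqref{1}--\eqref{3} and the $f$-reduction from \eqref{4} are purely formal, so I do not anticipate a real obstacle. If one prefers to avoid citing Artin's theorem, the same conclusion follows from an explicit terminating reduction: use \eqref{1} to eliminate $\tau_3=-\tau_1-\tau_2$; then \eqref{2} rewrites $\tau_2^2$ as a $\Z[\t]$-combination of $1,\tau_1,\tau_1^2,\tau_1\tau_2$, capping the exponent of $\tau_2$ at $1$; reducing \eqref{3} modulo \eqref{2} yields a relation whose leading term is $\tau_1^3$, capping the exponent of $\tau_1$ at $2$; and \eqref{4} finally caps the exponent of $f$ at $1$. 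Ordering monomials so that $\tau_2^2$ and $\tau_1^3$ are the leading terms makes each reduction strictly lower a $\tau$-exponent, so an induction on the $\tau$-degree shows the process terminates at precisely the twelve listed monomials.
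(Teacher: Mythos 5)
Your argument is correct, and your main route is genuinely different from the paper's. The paper proceeds by an explicit hands-on rewriting: from $e_1(\tau)=0$ it derives $\tau_3=-\tau_1-\tau_2$, then from $e_2(\tau)=e_2(\s)$ it derives $\tau_2^2=-\tau_1^2-\tau_1\tau_2-e_2(\s)$, then combining with $2f=e_3(\tau)+e_3(\s)$ it derives $\tau_1^3=-\tau_1e_2(\s)+2f-e_3(\s)$, and finally $f^2=fe_3(\s)$ caps the exponent of $f$; these four rewriting rules reduce any monomial to the twelve listed ones. This is exactly your fallback reduction at the end (and your computation of the $\tau_1^3$ relation matches the paper's). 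Your primary route instead invokes the integral structure theorem for symmetric polynomials --- $\Z[\tau_1,\tau_2,\tau_3]$ is free over $\Z[e_1(\tau),e_2(\tau),e_3(\tau)]$ on the six staircase monomials --- and then observes that modulo $I$ each $e_i(\tau)$ lies in the image of $\Z[\t,f]$, so the staircase monomials generate over $\Z[\t,f]$, after which $f^2\equiv e_3(\s)f$ caps the $f$-exponent. That freeness statement does hold over $\Z$ (it follows by induction from the monic relation $\prod_i(T-\tau_i)=0$ satisfied by each $\tau_i$), so there is no gap. What your approach buys is conceptual transparency and a slightly stronger intermediate statement (generation over $\Z[\t,f]$ by the six staircase monomials using only the first three relations); what the paper's approach buys is self-containedness, and its explicit relations \eqref{tau3}, \eqref{tau2}, \eqref{tau1} are reused verbatim in the subsequent Hilbert-series bookkeeping, which is presumably why the author wrote it that way.
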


\begin{proof}
Clearly the elements $\Pi_{k=1}^{3}\tau_{k}^{i_k}f^{j}$ with no restriction on exponents, generate $\Z[\tau,\t ,f]/I$ as a $\Z[\t]-$module.
The identity \eqref{1}  means  
\begin{equation}\label{tau3}
 \tau_3=-\tau_1-\tau_2. 
\end{equation}
By using \eqref{tau3}, we have 
$$e_2(\tau)=\tau_1 \tau_2+\tau_3 (\tau_1 +\tau_2 )=\tau_1 \tau_2 -(\tau_1+\tau_2)^2=-\tau_1^2 -\tau_2^2-\tau_1 \tau_2.$$
Therefore, $
\tau_2^2=-\tau_1^2-\tau_1\tau_2-e_2(\tau),
$
and hence 
\begin{equation}\label{tau2}
\tau_2^2=-\tau_1^2-\tau_1 \tau_2-e_2(s)
\end{equation}
by \eqref{2}. 
It follows from \eqref{tau3} and \eqref{tau2} that
\begin{eqnarray*}
e_3(\tau) &=& \tau_1\tau_2 \tau_3 \\
          &=& -\tau_1\tau_2(\tau_1+\tau_2) \\
          &=& -\tau_1^2\tau_2-\tau_1\tau_2^2 \\
          &=& -\tau_1^2\tau_2+\tau_1(\tau_1^2+\tau_1 \tau_2 +e_2(s))\\
          &=& \tau_1^3+\tau_1 e_2(s),\\
\end{eqnarray*}
Therefore $\tau_1^3=-\tau_1 e_2(s)+e_3(\tau)$ and hence 
\begin{equation}\label{tau1}
\tau_1^3=-\tau_1 e_2(s)+2f-e_3(s)
\end{equation}
by \eqref{3}. 
Therefore, $\tau_k^{3-k+1}$ is written as \eqref{tau3}, \eqref{tau2} and \eqref{tau1}
, satisfying that the exponent of $\tau_k$ is less than or equal to $3-k$ and that of $f$ is 0 or 1. 
In addition, $f^2$ is written as \eqref{4}, so we can always assume the exponent of $f$ to be 0 or 1. This completes the proof of the lemma.
\end{proof}

Now we are in a position to complete the proof of Theorem~\ref{maintheo}. 
\begin{proof}[Proof of Theorem~\ref{maintheo}]  
If two formal power series $a(x)=\sum_{i=0}^\infty a_ix^i$ and $b(x)=\sum_{i=0}^\infty b_ix^i$ in $x$ with real coefficients $a_i$ and $b_i$ satisfy $a_i\le b_i$ for every $i$, then we express this as $a(x)\le b(x)$.  

The Hilbert series of the free $\Z[t]$-module generated by $\prod_{k=1}^{3}\tau_k^{i_k}f^j$ is given by $\frac{1}{(1-x^2)^3}x^{2(\sum_{k=1}^{3}i_k+3j)}$, so it follows from Lemma~\ref{gene} that 
\begin{equation} \label{ineq}
F(\Z[\tau,\t,f]/I,x)\le \frac{1}{(1-x^2)^3}\sum_{0 \leq i_1 \leq 2, \ 0 \leq i_2 \leq 1}x^{2(i_1 +i_2)}\sum_{j=0}^{1}x^{6j}
\end{equation}
and the equality holds above if and only if $\Z[\tau,\t,f]/I$ is free as a $\Z[t]$-module.  Here the right hand side in \eqref{ineq} above is equal to 
\[
\frac{1}{(1-x^2)^3}(1+x^2)(1+x^2+x^4)(1+x^6)
\]
which agrees with $F(H^*_T(\mG_2),x)$ by Lemma~\ref{inje}.  Therefore $F(\Z[\tau,\t,f]/I,x) \le F(H^*_T(\mG_2),x)$.  On the other hand, the surjectivity of the map \eqref{cano} implies the opposite inequality.  Therefore  $F(\Z[\tau,\t,f]/I,x)=F(H^*_T(\mG_2),x)$.  This means that the inequality in \eqref{ineq} must be an equality and hence $\Z[\tau,\t,f]/I$ is free as a $\Z[t]$-module, in particular, as a $\Z$-module.  Since the map in \eqref{cano} is surjective and $F(\Z[\tau,\t,f]/I,x)=F(H^*_T(\mG_2),x)$, we conclude that the map in \eqref{cano} is actually an isomorphism. This proves Theorem~\ref{maintheo}.
\end{proof}

\bigskip
\noindent
{\bf Acknowledgment.}   The author takes this opportunity to thank her Ph.D. supervisor Mikiya Masuda for sparing his precious time and give constructive comments throughout the process of writing this paper. I also thank Shizuo Kaji and Hiroshi Naruse, who gave me the opportunity to consider this problem on the flag manifold of type $G_2$. Finally, Megumi Harada gave me lots of valuable advice both about mathematics and writing in English.

\bigskip

\end{document}